\numberwithin{equation}{section} 
\newtheorem{theorem}{Theorem}[section]
\newtheorem{lemma}[theorem]{Lemma}
\newtheorem{corollary}[theorem]{Corollary}
\newtheorem{definition}[theorem]{Definition}
\newtheorem{conjecture}[theorem]{Conjecture}
\newtheorem{problem}[theorem]{Problem}
\newtheorem{question}[theorem]{Question}
\newcommand{\al}{\alpha}
\newcommand{\be}{\beta}
\newcommand{\ga}{\gamma}
\newcommand{\Ga}{\Gamma}
\newcommand{\del}{\delta}
\newcommand{\Lam}{\Lambda}
\newcommand{\eps}{\epsilon}
\newcommand{\sig}{\sigma}
\newcommand{\om}{\omega}
\newcommand{\Om}{\Omega}
\newcommand{\vphi}{\varphi}
\newcommand{\cO}{\mathcal{O}}
\newcommand{\cS}{\mathcal{S}}
\newcommand{\bC}{\mathbb{C}}
\newcommand{\bR}{\mathbb{R}}
\newcommand{\bZ}{\mathbb{Z}}
\newcommand{\bQ}{\mathbb{Q}}
\newcommand{\goa}{\mathfrak{a}}
\newcommand\set[1]{\left\{#1\right\}}
\newcommand\pa[1]{\left(#1\right)}
\newcommand\idist[1]{\langle#1\rangle}
\newcommand\av[1]{\left|#1\right|}
\newcommand{\onto}{\xymatrix{\ar@{>>}[r]&}}
\newcommand{\da}[4]{\xymatrix{#1 \ar@<.5ex>[r]^{#2} \ar@<-.5ex>[r]_{#3} & #4}}
\begin{document}
\title{A solution to a problem of Cassels and Diophantine properties of cubic numbers}
\author{Uri Shapira}
\thanks{* Part of the author's Ph.D thesis at the Hebrew University of Jerusalem.\\ Email: ushapira@gmail.com}
\begin{abstract}
We prove that almost any pair of real numbers $\al,\be$, satisfies the following inhomogeneous uniform version of Littlewood's conjecture:  
\begin{equation}\label{main equation}
\forall \ga,\del\in\bR,\quad \liminf_{|n|\to\infty} \av{n}\idist{n\al-\ga}\idist{n\be-\del}=0,
\end{equation}
where $\idist{\cdot}$ denotes the distance from the nearest integer. The existence of even a single pair that satisfies \eqref{main equation}, solves a problem of Cassels ~\cite{Ca} from the 50's. We then prove that if  $1,\al,\be$ span a totally real cubic number field, then $\al,\be$, satisfy \eqref{main equation}. This generalizes a result of Cassels and Swinnerton-Dyer, which says that such pairs satisfies Littlewood's conjecture. It is further shown that if $\al,\be$ are any two real numbers, such that $1,\al,\be$, are linearly dependent over $\bQ$, they cannot satisfy ~\eqref{main equation}. The results are then applied to give examples of irregular orbit closures of the diagonal groups of a new type. The results are derived from rigidity results concerning hyperbolic  actions of higher rank commutative groups on homogeneous spaces. 
\end{abstract}
\maketitle
\section{Introduction}\label{introduction}
\subsection{Notation} We first fix our notation and define the basic objects to be discussed in this paper.
Let $X_d$ denote the space of $d$-dimensional unimodular lattices in $\bR^d$ and let $Y_d$ denote the space of translates of such lattices. Points of $Y_d$ will be referred to as \textit{grids}, hence for $x\in X_d$ and $v\in\bR^d$, $y=x+v\in Y_d$ is the grid obtained by translating the lattice $x$ by the vector $v$. We denote by $\pi$  the natural projection 
\begin{equation}\label{projections}
 Y_d \stackrel{\pi}{\longrightarrow} X_d,\quad x+v  \mapsto x.
 \end{equation} 
For each $x\in X_d$, we identify the fiber $\pi^{-1}(x)$ in $Y_d$ with the torus $\bR_d/x$. 
 Let $N:\bR^d\to \bR$ denote the function $N(w)=\prod_1^dw_i$.
For a grid $y\in Y_d$, we define the \textit{product set} of $y$ to be
\begin{equation}\label{prod set}
P(y)=\set{N(w) : w\in y}
\end{equation}
In this paper we shall study properties of the product set. We will mainly be interested in density properties and the values near zero. We denote
\begin{equation}\label{mu}
N(y)= \inf\set{\av{N(w)} : w\in y }.
\end{equation}
The ambiguous use of the symbol $N$ both for a function on $\bR^d$ and for a function on $Y_d$ and the lack of appearance of the dimension $d$ in the notation should not cause any confusion. 
The \textit{inhomogeneous minimum} of a lattice $x\in X_d$ is defined by
\begin{equation}\label{inhomogeneous minimum}
\mu(x)=\sup\set{N(y) : y\in \pi^{-1}(x)}.
\end{equation}  
The \textit{inhomogeneous Markov spectrum} (or just the spectrum) is defined by
\begin{equation}\label{spectrum}
\cS_d=\set{\mu(x) : x\in X_d}.
\end{equation}   
A more geometric way to visualize the above notions is the following: The \textit{star body} of radius $\eps>0$ is the set $S_\eps=\set{w\in\bR^d : \av{N(w)}<\eps}$. In terms of star bodies, for a grid $y\in Y_d$, $N(y)=\inf\set{\eps : S_\eps\cap y\ne\emptyset}$ and for a lattice $x\in X_d$, $\mu(x)$ is the least number such that for any $\eps>\mu(x)$, the star body $S_\eps$ intersects all the grids of $x$ or equivalently, $S_\eps$ projects onto the torus $\pi^{-1}(x)$ under the natural projection.
\subsection{Dimension 2}\label{dim2} In ~\cite{D} Davenport showed (generalizing a result of Khintchine) that for any $x\in X_2$ one has $\mu(x)>\frac{1}{128}$, hence the spectrum $\cS_2$ is bounded away from zero. The constant $\frac{1}{128}$ is not optimal and much work has been done to improve Davenport's lower bound of the spectrum (see \cite{Ca},\cite{Ca2},\cite{D} and the references therein). The set $\set{y\in\pi^{-1}(x): \mu(y)>0}$ (where $x\in X_2$ is arbitrary) has been investigated too and in a recent work ~\cite{Ts} it was shown that it has full Hausdorff dimension and in fact that it is a winning set for Schmidt's game. 
\subsection{Cassels problem} In his book \cite{Ca} (p. 307), Cassels raised the following natural question:
\begin{problem}[Cassels]\label{Cassels problem}
In dimension $d\ge 3$, is the infemum of the spectrum $\cS_d$ equal to zero.
\end{problem}
We answer Cassels' problem affirmatively. In fact we show that the infemum is attained and give explicit constructions of lattices attaining the minimum. The following theorem is a consequence of corollary ~\ref{almost any GDP}(1):
\begin{theorem}\label{theorem 101}
For $d\ge 3$, almost any lattice $x\in X_d$ (with respect to the natural probability measure) satisfies $\mu(x)=0$.
\end{theorem}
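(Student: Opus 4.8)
\noindent\emph{Proof strategy.} The plan is to separate a soft ergodic ingredient from a hard rigidity ingredient. Let $A<\mathrm{SL}_d(\bR)$ be the group of diagonal matrices with positive diagonal entries, acting on $X_d$ and on $Y_d$ by $a\cdot(x+v)=ax+av$. Since $N(aw)=N(w)$ for $a\in A$ and $w\in\bR^d$, the function $N$ on $Y_d$ is $A$-invariant, and hence so is $\mu$ on $X_d$; in particular $E:=\set{x\in X_d:\mu(x)>0}$ is $A$-invariant. I would also record that $\mu$ is upper semicontinuous: on a compact subset of $X_d$ the supremum in \eqref{inhomogeneous minimum} may be taken over a fixed compact set of translation vectors, and $(x,v)\mapsto N(x+v)$ is jointly upper semicontinuous because a grid near $x+v$ has points near those of $x+v$; thus $E$ is Borel and the goal is to show it is Haar-null. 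By Moore's ergodicity theorem the $A$-action on $X_d$ is ergodic for the (fully supported) Haar measure, and consequently $\overline{Ax}=X_d$ for almost every $x$. It therefore suffices to prove the implication
\begin{equation*}
\overline{Ax}=X_d\ \Longrightarrow\ \mu(x)=0 .
\end{equation*}

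For this I would use a geometric reformulation of the relation $N(y)=0$. For a grid $z$ put $N_{\min}(z)=\min\set{\norm{w}:w\in z}$, a continuous function on $Y_d$ which vanishes exactly on the zero section $Z_0:=\set{x+0:x\in X_d}$. From $\av{N(w)}\le\norm{w}^d$ and the $A$-invariance of $N$ one gets $N(z)\le\bigl(\inf_{a\in A}N_{\min}(az)\bigr)^d$; conversely, using an element of $A$ to equalise the coordinates of a near-optimal point of $z$ yields a bound $\inf_{a\in A}N_{\min}(az)\le c_d\,N(z)^{1/d}$. Hence $N(z)=0$ if and only if $\inf_{a\in A}N_{\min}(az)=0$, i.e.\ if and only if the orbit $Az$ comes arbitrarily close to $Z_0$; in particular $\overline{Az}=Y_d$, and more generally $\overline{Az}\cap Z_0\neq\emptyset$, implies $N(z)=0$. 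Note also that $\pi$ is proper, so $\pi(\overline{Ay})$ is a closed $A$-invariant set containing $Ax$; when $\overline{Ax}=X_d$ this forces $\pi(\overline{Ay})=X_d$ for every grid $y$ over $x$.

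The main obstacle is the rigidity step. Here I would invoke a classification of the closed $A$-invariant subsets of the space of grids $Y_d=\mathrm{ASL}_d(\bR)/\mathrm{ASL}_d(\bZ)$ lying over a lattice $x$ with $\overline{Ax}=X_d$ --- this is the rigidity of hyperbolic higher-rank abelian actions referred to in the introduction, and is where the genuine work lies. The outcome one wants is that for such $x$ the orbit closure of every grid $y=x+v$ with $v\notin\bQ\otimes_{\bZ}x$ (a \emph{non-torsion} grid) is all of $Y_d$ (or at least accumulates on $Z_0$), whence $N(y)=0$ by the previous paragraph. The remaining grids are disposed of by hand: for $v=0$ one has $N(x+0)=N_{\min}(x+0)=0$ trivially; and for a torsion grid $y=x+v$ with $qv\in x$, the orbit closure $\overline{Ay}$ is a closed $A$-invariant subset of the finite torsion cover $M_q=\set{x'+v':x'\in X_d,\ qv'\in x'}$ surjecting onto $X_d$, and using the monodromy of $M_q\to X_d$ together with the existence in $X_d$ of lattices with arbitrarily short primitive vectors $u$ --- for which $u/q$ is a torsion point at distance $\le\norm{u}/q$ from the lattice --- one checks that $\overline{Ay}$ still comes arbitrarily close to $Z_0$, so $N(y)=0$ once more. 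Taking the supremum over all grids over $x$ gives $\mu(x)=0$, and combined with the density of $\set{x:\overline{Ax}=X_d}$ this yields $\mu(x)=0$ for almost every $x\in X_d$; in particular $\inf\cS_d=0$ is attained. The serious difficulty is the orbit-closure rigidity for non-torsion grids; every other step is soft.
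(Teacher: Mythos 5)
Your high-level architecture — an ergodicity step reducing to the implication $\overline{Ax}=X_d\Rightarrow\mu(x)=0$, plus a rigidity step for $A$-orbit closures of grids — matches the paper's, and your soft reductions (the $A$-invariance of $\mu$, the properness of $\pi$, the reformulation of $N(y)=0$ as accumulation of $Ay$ on the zero section) are all correct. But the proposal has a genuine gap precisely at the point you flag as ``where the genuine work lies'': you invoke, as a black box, the assertion that every non-torsion grid over a lattice with dense $A$-orbit has $A$-orbit closure equal to all of $Y_d$ (or at least meeting the zero section). This is not an available rigidity theorem, it is strictly stronger than what the paper proves, and the paper does not establish it. What the paper actually does is more indirect: given $\overline{Ax}=X_d$, it extracts a lattice $x_0\in\overline{Ax}\setminus Ax$ with a \emph{compact} $A$-orbit, and for a grid $y$ over $x$ it studies $F_0=\overline{Ay}\cap\pi^{-1}(x_0)$. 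If $F_0$ contains an irrational grid, a result from~\cite{Sh} (lemma 4.8 there) shows the whole fiber $\pi^{-1}(x_0)$ lies in $\overline{Ay}$; that suffices for $\overline{P(y)}=\bR$, but it does \emph{not} give $\overline{Ay}=Y_d$. If $F_0$ consists only of rational grids, the paper runs a horospherical/root-group argument (Claims 1--3 in the proof of Theorem~\ref{existence of GDP}), using Corollary~\ref{independence} (which crucially needs $d\ge3$ and Dirichlet's unit theorem) to produce a ray $\set{u_{ij}(t)y_0:t\in I}$ in $\overline{Ay}$, then Ratner's theorem to upgrade this to enough density. Nothing in the paper classifies, or needs to classify, the full $A$-orbit closure of a grid over a dense-orbit lattice.

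Your torsion-grid argument via the cover $M_q\to X_d$ is also not tight. Knowing $\pi(\overline{Ay})=X_d$ only produces, over a lattice $x'$ with a short primitive vector $u$, \emph{some} $q$-torsion grid $x'+v'$ in $\overline{Ay}$; it does not guarantee $v'$ is close to $0$ (e.g.\ $v'=u/q$) rather than another torsion point far from the zero section, and the appeal to ``monodromy'' does not by itself let you move within the fiber of $M_q\to X_d$ while staying inside the closed set $\overline{Ay}$. The paper handles rational grids as part of the same horospherical case analysis rather than by a separate covering-space argument. In short: the reduction to a rigidity statement is fine, but the rigidity statement you posit is neither proved nor of the right shape, so the core of the theorem is left open.
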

\subsection{Diophantine approximations}\label{DA}
Of particular interest to Diophantine approximations, are lattices of the following forms: Let $v\in\bR^{d-1}$ be a column vector. Denote 
\begin{equation}\label{hor sgrps}
h_v=\pa{
\begin{array}{ll}
I_{d-1}&v\\
0 & 1
\end{array}
},\quad 
g_v=\pa{
\begin{array}{ll}
1&v^t\\
0 & I_{d-1}
\end{array}
}
\end{equation}
where $I_{d-1}$ denotes the identity matrix of dimension $d-1$ and the $0$'s denote the corresponding trivial vectors.
Let $x_{v},z_v\in X_d$, denote the lattices spanned by the columns of $h_v$ and $g_v$ respectively. For $\ga\in\bR$, denoting  by $\idist{\ga}$, the distance from $\ga$ to the nearest integer, an easy calculation shows that the statements
\begin{equation}\label{mu(x)1}
\forall \vec{\ga}\in\bR^{d-1}\quad \liminf_{|n|\to\infty} \left|n\right|\prod_1^{d-1}\idist{nv_i-\ga_i}=0,
\end{equation}
\begin{equation}\label{mu(z)2}
\forall\ga\in \bR\quad \liminf_{\prod \av{n_i}\to\infty}\prod_1^{d-1}\av{n_i}\idist{\sum_1^{d-1}n_iv_i-\ga}=0,	
\end{equation}
imply that $\mu(x_{v})=0$ and $\mu(z_{v})=0$ respectively.
\begin{theorem}\label{corollary 102} Let $d\ge 3$
\begin{enumerate}
\item For almost any $v\in\bR^{d-1}$ (with respect to Lebesgue measure) ~\eqref{mu(x)1} and ~\eqref{mu(z)2} are satisfied and in particular $\mu(x_{v})=\mu(z_{v})=0$.
\item Nonetheless, if $\dim span_\bQ\set{1,v_1,\dots,v_{d-1}}\le 2$ then $\mu(x_{v}),\mu(z_{v})$ are positive.
\end{enumerate}
\end{theorem}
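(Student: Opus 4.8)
\noindent\emph{Proof strategy.}
I would begin by recording the dictionary implicit in the statement. Minimising $\av{N(\cdot)}$ over the lattice $x_{v}=h_v\bZ^d$ along a grid $x_{v}+u$ gives, with $\ga_i=u_dv_i-u_i$,
\begin{equation*}
N(x_{v}+u)=\inf_{m_d\in\bZ}\av{m_d+u_d}\prod_{i=1}^{d-1}\idist{m_dv_i+u_i}=\inf_{n\in u_d+\bZ}\av{n}\prod_{i=1}^{d-1}\idist{nv_i-\ga_i},
\end{equation*}
and likewise $N(z_{v}+u)=\inf_{m\in\bZ^{d-1}}\idist{\sum_1^{d-1}m_iv_i-\ga}\prod_{i=1}^{d-1}\av{m_i+u_i'}$ for suitable $\ga,u'$ depending on $u$. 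Since $\av{m_d+u_d}\asymp\av{m_d}$ as $\av{m_d}\to\infty$, the conditions \eqref{mu(x)1} and \eqref{mu(z)2} are, for each $v$, equivalent to $\mu(x_{v})=0$ and $\mu(z_{v})=0$ respectively (the bounded values of the summation index contribute only finitely many, and for generic $\ga$ positive, terms). Thus part (1) is the statement that for Lebesgue-a.e.\ $v$ every grid over $x_{v}$ and over $z_{v}$ attains values of $\av N$ arbitrarily close to $0$; this is the metric half of the paper's main results (corollary~\ref{almost any GDP}(1)), proved by the rigidity of the diagonal action, and I would simply invoke it. It does \emph{not} follow cheaply from Theorem~\ref{theorem 101} by Fubini: although $\mu$ is $A$-invariant and $\set{x_{v}}$ is an expanding horospherical orbit, the transverse centraliser directions of the relevant $a\in A$ are not contracted, so one would have to upgrade a bare Fubini to an equidistribution statement for expanding horospherical pieces.

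\smallskip
\noindent\emph{Part (2), $\dim\mathrm{span}_\bQ\set{1,v_1,\dots,v_{d-1}}=1$.}
Here all $v_i\in\bQ$; fix a common denominator $q$, so $x_{v},z_{v}\subseteq\tfrac1q\bZ^d$. For $y=x_{v}+(\tfrac1{2q},\dots,\tfrac1{2q})$ every coordinate of every point of $y$ lies in $\tfrac1{2q}(2\bZ+1)$, so $\av{N(w)}\ge(2q)^{-d}$ for all $w\in y$ and hence $\mu(x_{v})\ge N(y)\ge(2q)^{-d}>0$; the same grid works for $z_{v}$.

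\smallskip
\noindent\emph{Part (2), $\dim\mathrm{span}_\bQ\set{1,v_1,\dots,v_{d-1}}=2$ --- the heart of the matter.}
Fix an irrational $\theta$ and rationals $a_i,b_i$, not all $b_i$ zero, with $v_i=a_i+b_i\theta$, and let $M$ be a common denominator. For $x_{v}$ I would take $y=x_{v}+u$ with $u_d=\tfrac12$ (so $\av{m_d+\tfrac12}\ge\tfrac12\max(1,\av{m_d})$) and $u_1,\dots,u_{d-1}$ to be chosen. If $b_i=0$ then $\idist{m_dv_i+u_i}=\idist{m_da_i+u_i}$ is bounded below for all $m_d$ as soon as $u_i\notin\tfrac1M\bZ$. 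For $i\ne j$ with $b_i,b_j\ne0$, the combination $b_j(m_dv_i+u_i)-b_i(m_dv_j+u_j)$ equals $m_d(a_ib_j-a_jb_i)+(b_ju_i-b_iu_j)$, hence lies in $\tfrac1{M^2}\bZ$ up to the summand $b_ju_i-b_iu_j$; choosing the $u_i$ generically --- one coordinate at a time --- one keeps $b_ju_i-b_iu_j$ bounded away from $\tfrac1{M^2}\bZ$ for all such pairs, and then for each fixed $m_d$ at most one of the factors $\idist{m_dv_i+u_i}$ with $b_i\ne0$ is small. Consequently, for some $c>0$ and all $m_d\in\bZ$,
\begin{equation*}
\av{m_d+\tfrac12}\prod_{i=1}^{d-1}\idist{m_dv_i+u_i}\ \ge\ c\cdot\min_{i\,:\,b_i\ne0}\ \max\pa{1,\av{m_d}}\cdot\idist{m_dv_i+u_i}.
\end{equation*}
Reducing $m_da_i$ modulo $1$ to its finitely many values, the right-hand side stays bounded away from $0$ exactly when each $u_i$ (for $b_i\ne0$) lies, for every one of those residues $r$, in the set $\set{\ga:\inf_{k\in\bZ}\max\pa{1,\av{k}}\idist{k\,b_i\theta+r+\ga}>0}$ of inhomogeneously badly approximable shifts for $b_i\theta$. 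This is the crux. For every irrational $\al$ that set is non-empty: writing $x=\set{(m_1+m_2\al,\,m_2):m\in\bZ^2}\in X_2$, one has for $u_2\notin\bZ$ that $N(x+(u_1,u_2))>0$ iff $u_1$ lies in it, so non-emptiness is a restatement of $\mu(x)>0$ (Davenport, \cite{D}), and \cite{Ts} moreover shows it is a winning set for Schmidt's game. Winning sets being stable under translation and finite intersection (and meeting every open set), one can choose each $u_i$ with $b_i\ne0$ inside the relevant finite intersection of translates of such a set while also meeting the finitely many genericity constraints; then $N(y)>0$, so $\mu(x_{v})>0$. The argument for $z_{v}$ is the same but easier: only one factor carries $\idist{\cdot}$, the others being $\ge\tfrac12$ once $u_i'=\tfrac12$, so no genericity of $u$ beyond the single shift is needed. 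In short, the quadratic case collapses to the one-dimensional inhomogeneous badly-approximable problem --- which, for an arbitrary irrational slope, is already settled by the two-dimensional theory cited --- and the main obstacle is organising this collapse, in particular the rational-relation argument that keeps at most one coordinate small at a time.
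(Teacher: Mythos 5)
Your proposal is correct, and its overall architecture for part (2) coincides with the paper's: reduce to the rank-two situation, set the last coordinate of the shift to $1/2$, arrange that at most one of the inhomogeneous factors $\idist{nv_i-\ga_i}$ can be small at a time, and then appeal to the two-dimensional inhomogeneous theory (Davenport) for that single remaining factor; the $z_v$ case is handled by noting the product $\prod\av{n_i+1/2}$ dominates $\av{m\vec q\cdot\vec n+1/2}$ and again invoking Davenport. The one genuinely different move is how you secure the shifts $\ga_i$ (your $u_i$). The paper passes first to $v'_i=q_i\al$ via the commensurability invariance of $\set{\mu=0}$, then cites a classical theorem of Cassels (\cite{Ca2}, Thm.~1) to pick the $\ga_i$ with $\ga_i/q_i-\ga_j/q_j\notin\tfrac1m\bZ$ while still badly approximable. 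You instead keep $v_i=a_i+b_i\theta$, split $m_d$ into residue classes mod the common denominator, and use the Einsiedler--Tseng result \cite{Ts} that the inhomogeneously badly approximable shifts form a winning set, together with the stability of winning sets under translation and finite intersection (and their meeting every open set, which also supplies the genericity $b_ju_i-b_iu_j\notin\tfrac1{M^2}\bZ$). Both routes close the same gap; the paper's appeal to Cassels is more classical and minimal, whereas the winning-set argument is more robust and avoids the commensurability step. Splitting off the purely rational case $\dim=1$ with the explicit grid shifted by $(\tfrac1{2q},\dots,\tfrac1{2q})$ is a small simplification the paper subsumes in its general reduction. For part (1) you invoke Corollary~\ref{almost any GDP}(1), exactly as the paper does, and your cautionary remark that a naive Fubini from Theorem~\ref{theorem 101} is insufficient is well taken.
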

Part (1) of the above theorem is a consequence of corollary ~\ref{almost any GDP}. Part (2) follows from known results in dimension 2 and will be proved in \S\ref{irregular orbits}.\\
Perhaps the most interesting amongst the results in this paper is the following theorem which shows that certain pairs of algebraic numbers are generic. The proof follows from corollary ~\ref{cubics are good} and corollary ~\ref{almost any GDP}(3).
\begin{theorem}\label{my result}
 If $1,\al,\be$ form a basis for a totally real cubic number field, then  
 \begin{equation}\label{my result 1}
\forall \ga,\del\in\bR\quad \liminf_{|n|\to\infty} |n|\idist{n\al-\ga}\idist{n\be-\del}=0,
\end{equation} 
 \begin{equation}\label{my result 1'}
\forall \ga\in\bR\quad \liminf_{|nm|\to\infty} |nm|\idist{n\al+m\be-\ga}=0.
\end{equation}
\end{theorem}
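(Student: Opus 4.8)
The plan is to translate the Diophantine statement \eqref{my result 1} (resp. \eqref{my result 1'}) into the vanishing of the inhomogeneous minimum $\mu(x_v)=0$ (resp. $\mu(z_v)=0$) for $d=3$ and the specific vector $v=(\alpha,\beta)$, via the implications \eqref{mu(x)1}$\Rightarrow\mu(x_v)=0$ and \eqref{mu(z)2}$\Rightarrow\mu(z_v)=0$ recorded in \S\ref{DA}. So it suffices to show that when $1,\alpha,\beta$ span a totally real cubic field $K$, the lattices $x_v$ and $z_v$ satisfy $\mu=0$. The natural mechanism is the one promised in the abstract: the pair $(x_v, y)$ for any grid $y$ in the fiber sits inside a homogeneous space on which the full diagonal group $A\cong(\bR^2,+)$ of $\mathrm{SL}_3$ acts, and the $A$-orbit of $x_v$ is \emph{compact} precisely because $v$ comes from a cubic field (the stabilizer is, up to finite index, the image of the unit group of the order $\bZ[1,\alpha,\beta]$, which has rank $2$ by Dirichlet, and totally real guarantees the relevant torus is $\bR$-split so the orbit is actually compact). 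The strategy is then: (i) reduce to showing $N(y)=0$ for every grid $y\in\pi^{-1}(x_v)$; (ii) use the $A$-action to move the problem to a convenient point; (iii) invoke the rigidity/density statement for hyperbolic higher-rank abelian actions that the paper attributes to its corollaries — precisely, that if $x$ has a compact $A$-orbit and is of the relevant form, then $P(y)$ is dense in $\bR$ for every grid $y$, forcing $N(y)=0$.

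Concretely, here are the steps I would carry out. First, I would set $v=(\alpha,\beta)^t$ and identify the compact $A$-orbit $Ax_v\subset X_3$: writing the companion/regular representation of $K$ acting on $K\otimes\bR\cong\bR^3$, the lattice $x_v$ is $\mathrm{GL}_3(\bR)$-equivalent to (a scalar multiple of) the standard lattice $\mathcal{O}_K\subset\bR^3$, and the diagonal group $A$ is conjugate to the image of $(K\otimes\bR)^\times_1$ (norm-one elements) in the regular representation; the unit group $\mathcal{O}_K^\times$ then provides a cocompact lattice in $A$, so $Ax_v$ is compact. Second, fix an arbitrary grid $y=x_v+w\in\pi^{-1}(x_v)$; I want to show $0\in\overline{P(y)}$. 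The key point is that $N$ is $A$-invariant ($N(aw)=N(w)$ for $a\in A$ since $\det a=1$ and $A$ is diagonal), so $P(ay)=P(y)$ for all $a\in A$, hence $\overline{P(y)}$ only depends on the $A$-orbit of the \emph{grid} $y$ in $Y_3$. Third — and this is where the cited corollaries (\ref{almost any GDP}, and for the cubic case presumably a topological-rigidity statement about orbit closures of grids over a compact-orbit lattice) enter — one shows that the orbit closure $\overline{Ay}$ in $Y_3$ must contain a grid meeting the star body $S_\epsilon$ for every $\epsilon>0$; equivalently $\overline{Ay}$ is large enough (it is not a union of finitely many compact orbits, by a linear-independence argument using that $1,\alpha,\beta$ span $K$ over $\bQ$, which rules out the degenerate behaviour of part (2) of Theorem \ref{corollary 102}) that it must come arbitrarily close to the "cusp direction" where some coordinate product tends to $0$. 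This gives $N(y)=0$ for all $y$, hence $\mu(x_v)=0$, and symmetrically $\mu(z_v)=0$ using that $z_v$ is the transpose-inverse construction and lies in the same homogeneous picture (indeed $z_v$ has a compact $A$-orbit for the same arithmetic reason).

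The main obstacle is the third step: extracting the quantitative conclusion $N(y)=0$ from the dynamics of the $A$-action on the \emph{fiber torus} $\pi^{-1}(x_v)=\bR^3/x_v$. The compactness of $Ax_v$ alone is not enough — one needs a measure-rigidity or topological-rigidity input (of Einsiedler–Katok–Lindenstrauss / Lindenstrauss type, or the specific rigidity theorems this paper develops) to control the closure of a single $A$-orbit of a grid, and then a separate argument to convert "the orbit closure is not a finite union of compact orbits" into "the product set accumulates at $0$." I expect the paper handles this by first proving the generic statement (Theorem \ref{theorem 101} / corollary \ref{almost any GDP}) via equidistribution, and then showing that a grid over a compact-$A$-orbit lattice coming from a cubic field is itself "generic" in the relevant sense — i.e.\ the rigidity forces its orbit closure to be everything compatible with the constraints, which is enough. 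A secondary technical point is verifying the "totally real" hypothesis is what makes $A$ genuinely $\bR$-split of rank $2$ (a complex cubic field would give a rank-$1$ split torus, and rigidity fails), and checking that the $\bQ$-linear independence of $1,\alpha,\beta$ is exactly the hypothesis needed to exclude the obstructions of Theorem \ref{corollary 102}(2); both should be short.
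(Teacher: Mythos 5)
Your proposal contains a fundamental error that derails the argument: you assert that the $A$-orbit of $x_v$ (where $v=(\al,\be)^t$) is \emph{compact}, on the grounds that $1,\al,\be$ span a totally real cubic field $K$. This is false. The lattice with a compact $A$-orbit is the \emph{geometric embedding} $\bar g_0$ of a lattice in $K$, where $g_0$ has rows $(1,\sig_i(\al),\sig_i(\be))$. The lattice $x_v=\bar h_v$ is generated by $(1,0,0)^t,(0,1,0)^t,(\al,\be,1)^t$, and a one-line computation with $h_v^{-1}\,\mathrm{diag}(a_1,a_2,a_3)\,h_v$ shows its $A$-stabilizer is trivial, so $Ax_v$ is certainly not compact. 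In fact the whole technical content of the relevant part of the paper (Theorem~\ref{density} and Corollary~\ref{cubics are good}) is to prove the opposite: that $Ax_v$ and $Az_v$ are \emph{dense} in $X_3$. This is shown by writing $g_v=p\,g_0$ with $p$ a lower-triangular perturbation, and then running a polynomial-divergence argument in the spirit of \cite{LW} together with Ratner's theorem to push the orbit of the perturbed point off the compact orbit and onto all of $X_3$.

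The error is not cosmetic, because the rest of your scheme leans on it. If $x_v$ really had a compact orbit, it would \emph{not} be $GDP$: by Corollary~\ref{compact grid orbits}, rational grids $y\in\pi^{-1}(x_v)$ would themselves have compact $A$-orbits, hence bounded away from the zero set of $N$, hence $N(y)>0$ and $\mu(x_v)>0$. Your step (iii) — trying to extract $N(y)=0$ for \emph{every} grid over a compact-orbit lattice by "orbit closure comes arbitrarily close to the cusp" — would therefore not go through; there is no rigidity input that can overcome the fact that a compact orbit simply does not approach the cusp. The actual logic of the paper is the reverse of what you propose: Theorem~\ref{existence of GDP} applies to a lattice $x$ such that some compact orbit $Ax_0$ lies in $\overline{Ax}\setminus Ax$; then the stable/unstable drift argument produces an entire ray of a root group through a grid over $x_0$ inside $\overline{Ay}$, and Ratner plus the independence of roots (Corollary~\ref{independence}, using $d\ge3$) upgrade this to density of $P(y)$. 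Density of $Ax_v$ guarantees such an $x_0$ exists with $x_0\notin Ax_v$ (indeed $\overline{Ax_v}=X_3$, which contains compact orbits and is not itself compact), and then Corollary~\ref{dense imply GDP} plus Corollary~\ref{almost any GDP}(3) finish. So the missing ingredient in your proposal — and the genuinely hard step you labelled "the main obstacle" — is precisely Theorem~\ref{density}: proving that conjugating a compact $A$-orbit by the specific non-diagonal matrix $p$ attached to the cubic basis $(1,\al,\be)$ yields a dense $A$-orbit.
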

\subsection{Remarks}
\begin{enumerate}
\item Cassels and Swinnerton-Dyer have shown \cite{CaSD} that any real pair $\al,\be$, belonging to the same cubic totally real field, satisfies Littlewood's conjecture, i.e. satisfies ~\eqref{my result 1} with $\ga=\del=0.$ Thus theorem ~\ref{my result} together with theorem ~\ref{corollary 102}(2), can be viewed as a strengthening of their result.  
\item As Cassels points out in his book \cite{Ca}, problem ~\ref{Cassels problem} belongs to a family of problems for various forms (other than $N$). Barnes \cite{Ba} solved an analogous problem with $N$ replaced by an indefinite quadratic form in $d\ge 3$ variables. Our method, when adapted appropriately,  seem to give a different proof of Barnes' result.
\item In a recent paper \cite{Bu}, Y. Bugeaud raised (independently of Cassels) the question of existence of pairs $\al,\be\in\bR$ which satisfy ~\eqref{main equation}.
\item Our methods are dynamical and rely on rigidity results such as Ratner's theorem ~\cite{R}, the results and techniques appearing in ~\cite{LW} and the extension of Furstenberg's times 2 times 3 theorem ~\cite{F} due to Berend ~\cite{B}. But, although the usual ergodic theoretic arguments provide existence only, our results provide us with concrete examples of numbers and lattices with nontrivial dynamical and Diophantine properties.   
\end{enumerate}
\subsection{Acknowledgments}: I would like to express my gratitude to Elon Lindenstrauss for numerous useful conversations, especially for the reference to ~\cite{Ca} and for discussing theorem ~\ref{density}. I thank Hillel Furstenberg and Yitzhak Katznelson for stimulating conversations concerning problem ~\ref{Cassels problem}. Finally, I would like to express my deepest gratitude and appreciation to my adviser and teacher, Barak Weiss, for his constant help, encouragement and belief.  
\section{Basic notions, groups and homogeneous spaces}\label{Basic notions}
When $d\ge 2$ is fixed we denote $G=SL_d(\bR)\ltimes\bR^d$, $G_0=SL_d(\bR)$ and $V=\bR^d$. We shall identify $G_0,V$ with the corresponding subgroups of $G$. Denote by $A<G_0$ the subgroup of diagonal matrices with positive diagonal entries. The Lie algebra of $A$ is identified with the Euclidean $d-1$ dimensional space
\begin{equation}\label{euclidean space}
\goa=\set{\textbf{t}=(t_1,\dots,t_d)\in\bR^d : \sum_1^dt_i=0}.
\end{equation}
$A$ is isomorphic to the additive group $\goa$, via the exponent map $\exp:\goa\to A$ given by $\exp(\textbf{t})=diag(e^{t_1},\dots,e^{t_d}).$ We denote the inverse of $\exp$ by $\log$. The \textit{roots} of $A$ are the linear functionals on $\goa$ of the following forms
\begin{equation}\label{roots of A}
\forall 1\le i\ne j\le d, \quad \textbf{t}\mapsto t_i-t_j ; \quad\forall 1\le k\le d,\quad \textbf{t}\mapsto t_k.
\end{equation}
The set of roots will be denoted by $\Phi$. As suggested in ~\eqref{roots of A}, we say that a root $\al\in\Phi$ corresponds to a pair $1\le i\ne j\le d$ or to an index $1\le k\le d$. To each root $\al\in\Phi$, there corresponds a one parameter unipotent subgroup $\set{u_\al(t)}_{t\in\bR}<G$, called the \textit{root group}, for which the following equation is satisfied
\begin{equation}\label{alpha(a)}
au_\al(t)a^{-1}=u_\al(e^{\al(\log(a))}t).
\end{equation} 
When the root $\al$ corresponds to a pair $i\ne j$, we sometime denote $u_\al(t)=u_{ij}(t)$. In this case $u_{ij}(t)\in G_0$ is the matrix all of whose entries are zero, except for the $ij$'th which is equal to $t$ and the diagonal entries which are equal to $1$. When $\al$ corresponds to $1\le k\le d$, we sometime denote $u_\al(t)=u_k(t)$. In this case, $u_k(t)\in V$ is the vector, $te_k$, where $e_k$ is the $k$'th standard vector. We sometime abuse notation and write, for a root $\al\in\Phi$ and $a\in A$, $\al(a)$ instead of $\al(\log(a))$.

For an element $a\in A$ we define the \textit{stable horospherical subgroup} of $G$ corresponding to it to be $U^-(a)=\left\{(g,v)\in G : a^n(g,v)a^{-n}\to_{n\to\infty} e\right\}$, and the \textit{unstable horospherical subgroup} to be $U^+(a)=U^-(a^{-1})$. An element $b\in A$ is called \textit{regular} if for any root $\al\in\Phi$, $\al(b)\ne 0$. For $b\in A$, any element $g\in G$ which is close enough to $e$, has a unique decomposition $g=cu^+u^-$, where $c$ centralizes $b$, $u^+\in U^+(b),u^-\in U^-(b)$ and $c,u^+,u^-$ lie in corresponding neighborhoods of $e$. If $b$ is regular then the centralizer of $b$ is $A$.\\
The linear action of $G_0$ on $\bR^d$ induces a transitive action of $G_0$ on $X_d$. The stabilizer of the lattice $\bZ^d\in X_d$ is $\Ga_0=SL_d(\bZ)$. This enables us to identify $X_d$ with the homogeneous space $G_0/\Ga_0$. For $g\in G_0$, we denote $\bar{g}=g\Ga_0$. $\bar{g}\in X_d$ represents the lattice spanned by the columns of the matrix $g$. In a similar manner we identify $Y_d$ with $G/\Ga$, where $\Ga=SL_d(\bZ)\ltimes \bZ^d$. For $(g,v)\in G$,  $(g,v)\Ga$ represents the grid $\bar{g}+v.$ $\Ga$ (resp $\Ga_0$) is a lattice in $G$ (resp $G_0$). The $G$ (resp $G_0$) invariant probability measure on $Y_d$ (resp $X_d$) will be referred to as the \textit{Haar measure}. $G_0$ and its subgroups act on $X_d,Y_d$ and the action commutes with the projection $\pi:Y_d\to X_d$. Finally, we say that a grid $y=x+v$ is \textit{rational}, if $v$ belongs to the $\bQ$-span of the lattice $x$. This is equivalent to saying that $y\in\pi^{-1}(x)$ is a torsion element.
\section{Compact $A$ orbits}\label{compact orbits} The following classification theorem, essentially goes back to ~\cite{Bac}. A modern proof can be found in ~\cite{LW} or ~\cite{Mc}. Before stating it, let us recall some notions from number theory. A \textit{totally real number field} is a finite extension of $\bQ$, all of whose embeddings into $\bC$ are real. A \textit{lattice} in a number field, is the $\bZ$-span of a basis of the field over $\bQ$. Let $K$ be a totally real number field of degree $d$ and let $\sig_i, i=1\dots d$, be the different embeddings of $K$ into the reals. The map $\vphi=(\sig_1,\dots,\sig_d)^t:K\to\bR^d$ is called a \textit{geometric embedding}. It is well known that if $\Lam$ is a lattice in $K$, then $\vphi(\Lam)$ is a lattice in $\bR^d.$ The ring of integers in $K$ is denoted by $\cO_K$ and the group of units of this ring is denoted by $\cO_K^*$. The logarithmic embedding of $\cO_K^*$ in $\goa$ (see ~\eqref{euclidean space}) is given by $\om\mapsto(\log\av{\sig_1(\om)},\dots,\log\av{\sig_d(\om)}).$ We shall denote the image of $\cO_K^*$ by $\Om_K$. Dirichlet's unit theorem implies that $\Om_K$ is a lattice in $\goa$. 
\begin{theorem}\label{compact lattice orbits}
Let $x_0\in X_d$. $Ax_0$ is compact if and only if there exists $a\in A$ such that $ax_0$ is (up to multiplication by a normalizing scalar) the geometric embedding of a lattice in a totally real number field, $K$, of degree $d$. Moreover there exists some finite index subgroup $\Om<\Om_K$ such that $\Om=\log\pa{Stab_A(x_0)}$.
\end{theorem}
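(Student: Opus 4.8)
The plan is to establish both directions by analyzing when the diagonal group $A$ can stabilize (as a set) a lattice, using the standard dictionary between diagonalizable $\bR$-split tori over $\bQ$ and orders in totally real fields.

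First I would prove the "if" direction, which is the easy half. Suppose $ax_0 = \lambda\,\vphi(\Lam)$ for some lattice $\Lam$ in a totally real field $K$ of degree $d$, some scalar $\lambda$, and some $a\in A$. Since $Ax_0$ is compact if and only if $A(ax_0)$ is compact, we may assume $x_0 = \vphi(\Lam)$ (the normalizing scalar only rescales and does not affect compactness of the $A$-orbit, as $A\subset SL_d$). Now the key observation is that multiplication by a unit $\om\in\cO_K^*$ with $\cO_K^*\Lam\subseteq\Lam$ (which holds after passing to a finite-index subgroup of units, or by replacing $\Lam$ by a fractional ideal of a suitable order) acts on $\vphi(K)=\bR^d$ as the diagonal matrix $\mathrm{diag}(\sig_1(\om),\dots,\sig_d(\om))$. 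Up to sign, after composing with an element of $A$ (and using that the product of the $|\sig_i(\om)|$ is $\pm1$ since $\om$ is a unit), this diagonal matrix lies in $A$ and preserves $\vphi(\Lam)$. Hence $\exp(\Om)\subseteq Stab_A(x_0)$ for a finite-index $\Om<\Om_K$, and Dirichlet's unit theorem tells us $\Om_K$ is a lattice in $\goa$; so $Stab_A(x_0)$ contains a lattice in $A$, forcing $Ax_0 = A/Stab_A(x_0)$ to be compact.

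Next, the "only if" direction. Assume $Ax_0$ is compact. Then $Stab_A(x_0)$ is a lattice in $A\cong\goa\cong\bR^{d-1}$, so it contains $d-1$ multiplicatively independent elements; pick one regular element $a_1$ with eigenvalues $e^{t_1},\dots,e^{t_d}$ all distinct and all $\prod e^{t_i}=1$. Writing $x_0 = g\bZ^d$, the condition $a_1 g\bZ^d = g\bZ^d$ says $g^{-1}a_1 g \in SL_d(\bZ)$. Thus $\gamma_1 := g^{-1}a_1 g$ is an integer matrix, diagonalizable over $\bR$ with distinct eigenvalues, hence its characteristic polynomial $f$ is a degree-$d$ integer polynomial with $d$ distinct real roots. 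Let $K = \bQ(\text{eigenvalue of }\gamma_1)$; since $f$ is irreducible over $\bQ$ exactly when... — here is where care is needed — one argues that the full lattice $\Lambda := g^{-1}\cdot(\text{something})$ together with all of $Stab_A(x_0)$ conjugated into $SL_d(\bZ)$ generates a commutative ring of integer matrices; its common eigenvalue fields patch to a single field $K$, which must be a degree-$d$ totally real field (totally real because the eigenvalues are real; degree exactly $d$ because the torus is $\bR$-split of maximal rank and the $\bZ$-module $\bZ^d$ carries a faithful action of an order $\cO\subset K$). Identifying $\bZ^d$ with a fractional $\cO$-ideal $\Lam\subset K$ via the simultaneous eigenbasis, the matrix $g$ becomes (up to $A$ and scaling) precisely the geometric embedding $\vphi$, and $\Om_K\cap\log(Stab_A(x_0))$ has finite index in both, giving the "moreover" clause.

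The main obstacle I expect is the irreducibility/degree bookkeeping in the "only if" direction: extracting from a single regular element of $Stab_A(x_0)$ only gives that $K$ is generated by one eigenvalue, but $f$ could a priori factor, giving a proper subfield. The fix is to use \emph{all} of the lattice $Stab_A(x_0)$ simultaneously — the commutative $\bQ$-algebra $E\subset M_d(\bQ)$ generated by the conjugates $g^{-1}\exp(\Om)g$ acts on $\bQ^d$, and because this torus is split of rank $d-1$ over $\bR$, the $\bR$-algebra $E\otimes\bR$ is the full diagonal algebra $\bR^d$, forcing $\dim_\bQ E = d$ and $E$ to be a field (a domain that is a finite-dimensional $\bQ$-algebra), necessarily totally real. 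One then checks that $\bZ^d$ is a finitely generated torsion-free $\cO$-module of rank $1$ over the order $\cO = E\cap M_d(\bZ)$, hence isomorphic as an $\cO$-module to a fractional ideal $\Lam$, and transporting structure identifies $x_0$ with $\vphi(\Lam)$ up to the action of $A$ and a scalar. I would cite \cite{LW} or \cite{Mc} for the clean modern packaging of exactly this argument rather than reproduce all the module-theoretic details.
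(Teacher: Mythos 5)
The paper does not prove Theorem~\ref{compact lattice orbits}; it attributes the result to Bachmann~\cite{Bac} and refers to~\cite{LW} and~\cite{Mc} for a modern proof. So there is no ``paper's own proof'' to compare against; I will assess your outline on its own and against the standard argument in those references.

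Your outline is essentially the standard one, but it has a concrete gap in the ``only if'' direction at the step where you assert that the commutative $\bQ$-algebra $E\subset M_d(\bQ)$ generated by $g^{-1}\exp(\Om)g$ is a field. Your parenthetical reason (``a domain that is a finite-dimensional $\bQ$-algebra'') begs the question of why $E$ is a domain. From $E\otimes\bR\cong\bR^d$ you only get that $E$ is \emph{reduced}, hence by Artin--Wedderburn $E\cong\prod_{i=1}^r K_i$ for totally real number fields $K_i$ with $\sum_i[K_i:\bQ]=d$. The faithful $E$-module $\bQ^d$ then splits as $\bigoplus_i V_i$ with $\dim_\bQ V_i=[K_i:\bQ]$, which is perfectly consistent with $r>1$; no module-theoretic fact rules it out. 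The correct argument that $r=1$ is a Dirichlet unit-rank count: conjugating $Stab_A(x_0)$ by $g$ lands it inside the unit group of the order $\cO=E\cap M_d(\bZ)$, and since each $K_i$ is totally real, Dirichlet gives $\mathrm{rank}(\cO^*)=\sum_i([K_i:\bQ]-1)=d-r$. Compactness of $A/Stab_A(x_0)$ forces $\mathrm{rank}(Stab_A(x_0))=d-1$, so $d-1\le d-r$ and hence $r=1$. This is exactly the point your write-up glosses over, and it is where higher rank (compactness, i.e.\ full unit rank) enters.

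Two smaller points. In the ``if'' direction, the phrase ``up to sign, after composing with an element of $A$'' is muddled: a sign matrix $\mathrm{diag}(\epsilon_i)$ does not preserve $\vphi(\Lam)$, so you should pass directly to the finite-index subgroup of $\cO_K^*$ consisting of \emph{totally positive} units stabilizing $\Lam$, whose geometric images already lie in $A$ and stabilize $\vphi(\Lam)$; its logarithmic image is then a finite-index sublattice of $\Om_K$, giving compactness. For the ``moreover'' clause, you write that $\Om_K\cap\log(Stab_A(x_0))$ has finite index in both, but the theorem asserts $\log(Stab_A(x_0))$ is \emph{itself} a finite-index subgroup of $\Om_K$; this does hold, because $g^{-1}Stab_A(x_0)g\subset\cO^*\subset\cO_K^*$ and the positivity of the diagonal entries of elements of $A$ shows these units are totally positive, so $\log$ identifies $Stab_A(x_0)$ with a subgroup of $\Om_K$, of finite index by the rank count.
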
 
As a corollary we get a classification of the compact $A$-orbits in $Y_d$. The proof is left to the reader.
\begin{corollary}\label{compact grid orbits}
A grid $y\in Y_d$ has a compact $A$-orbit, if and only if it is rational and $\pi(y)\in X_d$ has a compact orbit. In this case, $Stab_A(y)$ is of finite index in $Stab_A(\pi(y))$.
\end{corollary}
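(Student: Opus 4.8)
The plan is to reduce the statement to elementary linear algebra on the fibre torus $\pi^{-1}(x)$, relying only on the standard fact that for a point $w$ in $X_d$ or $Y_d$ the stabilizer $Stab_A(w)$ — being the intersection of $A$ with a conjugate of $\Ga_0$ (resp.\ $\Ga$), which is discrete in $G_0$ (resp.\ $G$) — is a discrete subgroup of $A\cong\bR^{d-1}$, and that the orbit $Aw$ is compact if and only if $Stab_A(w)$ is a cocompact lattice in $A$, i.e.\ a free abelian group of full rank $d-1$. Granting this, one half of the equivalence is automatic: if $Ay$ is compact then so is $A\pi(y)=\pi(Ay)$, since $\pi$ is continuous and $A$-equivariant; and in that case $Stab_A(y)\subseteq Stab_A(\pi(y))$ are two full-rank lattices in $A$ with one contained in the other, so $Stab_A(y)$ has finite index in $Stab_A(\pi(y))$ — which is the last assertion of the corollary. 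It therefore remains to connect rationality of $y$ with compactness of $Ay$, given that $A\pi(y)$ is already known to be compact.

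For the implication ``$Ay$ compact $\Rightarrow y$ rational'', write $y=x+v$ with $x=\pi(y)$. Since $Stab_A(y)$ has full rank $d-1$, the lattice $\log Stab_A(y)$ in $\goa$ is not contained in the finite union of the $d$ proper subspaces $\goa\cap\{t_k=0\}$, $1\le k\le d$; choosing $\textbf{t}$ in $\log Stab_A(y)$ outside this union and putting $a=\exp(\textbf{t})$, the element $a$ lies in $Stab_A(y)$ and has no eigenvalue equal to $1$. From $ay=y$ one reads off that $ax=x$ (so $a\in Stab_A(x)$) and that $(a-I)v\in x$. As $a-I$ is invertible and $(a-I)x\subseteq x$, the group $(a-I)^{-1}x$ is a lattice containing $x$ with finite index $\av{\det(a-I)}$, hence lies inside the $\bQ$-span of $x$; since $v\in(a-I)^{-1}x$, the grid $y$ is rational.

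For the converse, assume $y=x+v$ is rational and $Ax$ is compact, so $Stab_A(x)$ is a full-rank lattice in $A$. Rationality gives $m\in\bN$ with $mv\in x$, so $y$ belongs to the finite set $\tfrac1m x/x\subseteq\bR^d/x\cong\pi^{-1}(x)$, and this set is invariant under $Stab_A(x)$ because every element of $Stab_A(x)$ fixes the lattice $x$ and hence also $\tfrac1m x$. Consequently the $Stab_A(x)$-orbit of $y$ is finite, so $Stab_A(y)=\{a\in Stab_A(x):ay=y\}$ has finite index in $Stab_A(x)$; in particular $Stab_A(y)$ is again a full-rank lattice in $A$, and therefore $Ay$, being the continuous image of the compact space $A/Stab_A(y)$, is compact. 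This simultaneously yields the finite-index statement.

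I expect the only non-formal input to be the quoted equivalence ``compact orbit $\iff$ cocompact stabilizer'', which ultimately rests on the orbit map $A/Stab_A(w)\to Aw$ being a homeomorphism onto the (locally closed) orbit; everything else is bookkeeping on the torus $\pi^{-1}(x)$, and Theorem \ref{compact lattice orbits} itself is not actually needed. The one place demanding a little care is the choice of the element $a$ with no eigenvalue $1$ in the first implication: this uses precisely that $Stab_A(y)$ has full rank, which is exactly where compactness of $Ay$ enters.
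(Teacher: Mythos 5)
The paper itself leaves this proof to the reader, so there is no argument in the text to compare against; judging your proposal on its own merits, it is correct. Both implications are handled cleanly: the choice of a regular-type element $a\in Stab_A(y)$ (with all diagonal entries $\ne 1$, available because a full-rank lattice in $\goa$ cannot lie in the finite union of hyperplanes $\{t_k=0\}$) makes $a-I$ invertible, and $(a-I)v\in x$ together with $(a-I)x\subseteq x$ forces $v\in(a-I)^{-1}x\subseteq\bQ\text{-span}(x)$; conversely, rationality of $v$ places $y$ in the finite $Stab_A(\pi(y))$-invariant set $\tfrac1m x/x$, so $Stab_A(y)$ has finite index in $Stab_A(\pi(y))$ and is again cocompact. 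The one nontrivial external input, that for a Lie group action compactness of the orbit is equivalent to cocompactness of the (discrete) stabilizer, is exactly the standard ``orbit map is a homeomorphism onto a locally closed orbit'' fact, and you flag it correctly.

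Worth noting: although the paper presents the statement as a corollary of Theorem \ref{compact lattice orbits}, your argument bypasses the number-field classification entirely and uses only the discreteness of $Stab_A$ and elementary lattice arithmetic on the fibre torus. That is arguably the cleaner route -- it isolates what the equivalence really depends on (cocompactness of the stabilizer of the base lattice, nothing about units in $\cO_K$) and would apply verbatim to any abelian subgroup of $G_0$ acting with compact orbit on the base, not just the full diagonal $A$.
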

The following corollary of theorem \ref{compact lattice orbits} is one of the places in which higher rank is reflected. 
\begin{corollary}\label{independence}
Let $d\ge 3$ and let $y\in Y_d$ be a grid with a compact $A$-orbit. Denote $A_0=Stab_A(y)$. Then, for any root $\al\in\Phi$, the set $\set{\al(a) : a\in A_0}$ is dense in the reals.
\end{corollary}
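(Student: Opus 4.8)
The plan is to reduce the assertion to a property of the unit group of a totally real number field, using the classification of compact $A$-orbits (Theorem~\ref{compact lattice orbits} and Corollary~\ref{compact grid orbits}), and then to dispose of the only possible obstruction to density by an elementary degree count in which the hypothesis $d\ge 3$ is crucial.

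First I would set $A_0=Stab_A(y)$ and describe its structure. By Corollary~\ref{compact grid orbits} the grid $y$ is rational, $x:=\pi(y)\in X_d$ has a compact $A$-orbit, and $A_0$ has finite index in $Stab_A(x)$. Applying Theorem~\ref{compact lattice orbits} to $x$---and using that $A$ is abelian, so that replacing $x$ by $ax$ with $a\in A$ does not change $Stab_A(x)$---one gets that $\log\pa{Stab_A(x)}$ is a finite index subgroup of $\Om_K$ for a suitable totally real number field $K$ of degree $d$. Hence $\log(A_0)$ is a finite index subgroup, of index $m$ say, of the lattice $\Om_K\subset\goa$. Since a subgroup of $(\bR,+)$ is either dense or cyclic, and multiplying a dense subgroup by $m$ keeps it dense, the inclusion $\al\pa{\log(A_0)}\supseteq\al\pa{m\,\Om_K}=m\cdot\al(\Om_K)$ shows that it suffices to prove that $\al(\Om_K)$ is \emph{not} cyclic.

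The core is then a number-theoretic argument. Suppose $\al(\Om_K)$ were cyclic. It is not the trivial group, since the full-rank lattice $\Om_K$ is not contained in the hyperplane $\ker\al\subset\goa$; so $\al(\Om_K)\cong\bZ$, and therefore the kernel $E=\set{\om\in\cO_K^*:\al(\log\om)=0}$ of the homomorphism $\al\circ\log\colon\cO_K^*\to\bR$ has rank $(d-1)-1=d-2\ge1$ and in particular is infinite. Now I would recall the explicit form of the roots in \eqref{roots of A} and of the logarithmic embedding $\om\mapsto\pa{\log\av{\sig_1(\om)},\dots,\log\av{\sig_d(\om)}}$. If $\al$ corresponds to an index $k$, then $\om\in E$ forces $\sig_k(\om)=\pm1$, hence $\om\in\set{1,-1}$ by injectivity of $\sig_k$, contradicting the infinitude of $E$. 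If $\al$ corresponds to a pair $i\ne j$, then $\om\in E$ forces $\sig_i(\om)=\pm\sig_j(\om)$; passing to the index $\le2$ subgroup $E'=\ker\pa{\om\mapsto\sig_i(\om)\sig_j(\om)^{-1}}\le E$, which is still infinite of rank $d-2$, the embeddings $\sig_i$ and $\sig_j$ agree on the subfield $F=\bQ(E')\subseteq K$. If $F=K$ this contradicts $\sig_i\ne\sig_j$; otherwise $F$ is a proper subfield, of degree $e\mid d$ with $e<d$, and since $E'\subseteq\cO_K^*\cap F\subseteq\cO_F^*$, Dirichlet's unit theorem gives $d-2\le e-1$, i.e.\ $e\ge d-1$, which is impossible because the largest proper divisor of $d$ is at most $d/2<d-1$ for $d\ge3$. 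In every case the assumption is contradicted, so $\al(\Om_K)$ is dense.

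The step I expect to be the real obstacle, and the one genuinely using higher rank, is this last case analysis: one must rule out that the conjugate embeddings satisfy an ``accidental'' multiplicative relation on a large subgroup of units. The hypothesis $d\ge3$ enters there twice---to force $E$ to be infinite and to make $e\ge d-1$ unsatisfiable---and indeed the statement fails for $d=2$, since then $\Om_K$ has rank $1$ and $\al(\Om_K)$ is cyclic. The remaining steps are formal, resting only on the quoted classification of compact $A$-orbits and on Dirichlet's unit theorem.
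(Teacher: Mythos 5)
Your proof is correct and follows essentially the same route as the paper's: both reduce via Theorem~\ref{compact lattice orbits} and Corollary~\ref{compact grid orbits} to showing $\al(\Om_K)$ is dense (equivalently, $\Om_K\cap\ker\al$ is not a lattice in $\ker\al$), then for a root $\al$ of the second type in~\eqref{roots of A} argue a kernel of rank $d-2$ would force a proper subfield of $K$, of degree at most $d/2$, whose unit group has rank $\ge d-2$, contradicting Dirichlet's unit theorem for $d\ge3$. Your treatment of the $\pm$ sign ambiguity via the index-$\le2$ subgroup $E'$ is a small bit of added care that the paper glosses over, but it does not change the structure of the argument.
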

\begin{proof} Let $K$ be the totally real number field of degree $d$ arising from theorem ~\ref{compact lattice orbits} and corollary ~\ref{compact grid orbits} and let $\al\in\Phi$ be a root. By corollary ~\ref{compact grid orbits}, $\log(A_0)$ is of finite index in $\Om_K$. It follows that it is enough to justify why $\al(\Om_K)$ is dense in the reals. As $\Om_K$ is a lattice in $\goa$, this is equivalent to $\Om_K\cap ker(\al)$ not being a lattice in $ker(\al)$. If $\al$ corresponds to a pair $i\ne j$ (see ~\eqref{roots of A}), then if $\Om_K\cap ker(\al)<ker(\al)$ is a lattice, then there is a subfield of $K$ (the field $\set{\theta\in K : \sig_i(\theta)=\sig_j(\theta)}$) with a group of units containing a copy of $\bZ^{d-2}$. The degree of this subfield is at most $d/2$ and so by Dirichlet's unit theorem the degree of the group of units in this subfield is at most $d/2-1$. This means that $d/2-1\ge d-2$ which is equivalent to $d\le 2$ a contradiction. If $\al$ corresponds to $k$, then the situation is even simpler as $\Om_K\cap ker(\al)=\set{0}$.  
\end{proof}
\section{Dynamics and \textit{GDP} lattices}\label{existence}
\subsection{Inheritance} The reason that the action of $A$ on $X_d,Y_d$ is of importance to us is the invariance of the product set, namely  $\forall a\in A, y\in Y_d$, $P(y)=P(ay)$.
\begin{definition}\label{GDP definition}\quad
\begin{enumerate}
	\item A grid $y\in Y_d$ is called $DP$ (dense products) if $\overline{P(y)}=\bR$.
	\item A lattice $x\in X_d$ is called $GDP$ if any grid $y\in\pi^{-1}(x)$ is $DP$.
\end{enumerate}
\end{definition} 
The proofs of the next useful lemma and its corollary are left to the reader.
\begin{lemma}[Inheritance]\label{inheritance lemma}
If $y,y_0\in Y_d$ are such that $y_0\in\overline{Ay}$, then $\overline{P(y_0)}\subset \overline{P(y)}$.
\end{lemma}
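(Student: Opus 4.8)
The plan is to reduce the lemma to two ingredients: the $A$-invariance of the product set (already recorded at the start of this section), and a lower-semicontinuity property of grids; once both are in hand, the lemma follows by a short diagonal argument. For the invariance, note that since $A<SL_d(\bR)$ consists of diagonal matrices of determinant one, for every $w\in\bR^d$ and $a\in A$ we have $N(aw)=\det(a)\,N(w)=N(w)$; as $A$ acts on a grid by $ay=\set{aw:w\in y}$, this gives $P(ay)=P(y)$ for every $a\in A$.

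The key step is to isolate the topological input: \emph{if $y_k\to y_0$ in $Y_d$, then every $w_0\in y_0$ can be written as $w_0=\lim_k w_k$ with $w_k\in y_k$.} To prove this I would use that $\Ga$ is discrete, so that $G\to G/\Ga=Y_d$ is a covering map; hence, for $k$ large, the convergence $y_k\to y_0$ lifts to a convergent sequence of representatives $(h_k,v_k)\to(h_0,v_0)$ in $G=SL_d(\bR)\ltimes\bR^d$, with $y_k=h_k\bZ^d+v_k$ and $y_0=h_0\bZ^d+v_0$. Writing $w_0=h_0m+v_0$ for the appropriate $m\in\bZ^d$, the vectors $w_k:=h_km+v_k\in y_k$ then converge to $w_0$.

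Granting these two facts, the lemma is immediate. Since $\overline{P(y)}$ is closed, it suffices to show $P(y_0)\subset\overline{P(y)}$. Choose $a_k\in A$ with $a_ky\to y_0$ and fix $s=N(w_0)\in P(y_0)$ with $w_0\in y_0$. Applying the topological input to the sequence $a_ky\to y_0$ produces $w_k\in a_ky$ with $w_k\to w_0$, so that $N(w_k)\to N(w_0)=s$ by continuity of $N$; but $N(w_k)\in P(a_ky)=P(y)$ by the invariance, hence $s\in\overline{P(y)}$. Letting $s$ range over $P(y_0)$ finishes the argument.

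I do not expect a genuine obstacle here; the one point deserving care is the lifting of a convergent sequence of cosets to a convergent sequence in $G$, which is legitimate precisely because $\Ga$ is a lattice, hence discrete. It is worth stressing that throughout we only approximate points of the limit grid $y_0$ by points of the nearby grids $a_ky$, and never the reverse; this asymmetry — points can be lost, but not created, in passing to the limit — is exactly what forces the conclusion to be the one-sided inclusion $\overline{P(y_0)}\subset\overline{P(y)}$ rather than an equality.
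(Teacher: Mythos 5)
The paper does not supply a proof of this lemma (it is explicitly left to the reader), so there is no argument of the paper's to compare against; your proposal is the expected argument and is correct. Both ingredients are handled properly: the $A$-invariance of $P$ follows from $\det a=1$ for $a\in A$, and the convergence $w_k\to w_0$ with $w_k\in a_ky$ is legitimately obtained via the covering map $G\to G/\Ga$ (valid because $\Ga$ is discrete), after which continuity of $N$ and $P(a_ky)=P(y)$ give $P(y_0)\subset\overline{P(y)}$, hence $\overline{P(y_0)}\subset\overline{P(y)}$.
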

\begin{corollary}\label{inheritance 2}\quad
\begin{enumerate}
	\item If $y,y_0\in Y_d$ are such that $y_0\in\overline{Ay}$ and $y_0$ is $DP$, then $y$ is $DP$ too.
	\item If $x,x_0\in X_d$ are such that $x_0\in\overline{Ax}$ and $x_0$ is $GDP$, then $x$ is $GDP$ too.
\end{enumerate}
\end{corollary}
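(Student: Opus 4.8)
The plan is to derive both assertions from the Inheritance Lemma (Lemma~\ref{inheritance lemma}), part~(2) requiring in addition a short compactness argument to produce the limit grid one needs.

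For part~(1) I would simply observe that $y_0\in\overline{Ay}$ together with Lemma~\ref{inheritance lemma} gives $\overline{P(y_0)}\subseteq\overline{P(y)}$; if $y_0$ is $DP$ then $\bR=\overline{P(y_0)}\subseteq\overline{P(y)}\subseteq\bR$, hence $\overline{P(y)}=\bR$ and $y$ is $DP$.

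For part~(2) I would fix an arbitrary $y\in\pi^{-1}(x)$ and show it is $DP$. Choosing $a_n\in A$ with $a_nx\to x_0$, and using that the $A$-action commutes with $\pi$, one has $\pi(a_ny)=a_nx$ for all $n$. Since $\pi$ is proper --- it is a fibre bundle with compact fibres $\bR^d/x$ over the locally compact base $X_d$; concretely, Mahler's criterion keeps the lattices $a_nx$ in a compact set, whence their covering radii are uniformly bounded and the translation parts of $a_ny$ can be chosen in a fixed compact subset of $\bR^d$ --- the sequence $\set{a_ny}$ lies in a compact subset of $Y_d$, so a subsequence converges, $a_ny\to y_0\in Y_d$. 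Then $y_0\in\overline{Ay}$ and $\pi(y_0)=\lim a_nx=x_0$, so $y_0\in\pi^{-1}(x_0)$; as $x_0$ is $GDP$, the grid $y_0$ is $DP$, and part~(1) applied to $y$ and $y_0$ shows $y$ is $DP$. Since $y$ was an arbitrary grid over $x$, the lattice $x$ is $GDP$.

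The one step that is not purely formal is this compactness argument: one has to rule out that the translated grids $a_ny$ run off to infinity in $Y_d$, which is precisely where properness of $\pi$ --- equivalently the uniform bound on covering radii of unimodular lattices inside a compact subset of $X_d$, via Mahler's compactness criterion --- is used. Everything else is a formal consequence of Lemma~\ref{inheritance lemma}.
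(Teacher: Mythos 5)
The paper explicitly leaves the proofs of Lemma~\ref{inheritance lemma} and Corollary~\ref{inheritance 2} to the reader, so there is no argument of the author's to compare against. Your proof is correct and is surely the intended one: part~(1) is an immediate consequence of the Inheritance Lemma, and for part~(2) the key observation is that $\pi\colon Y_d\to X_d$ is proper (a fibre bundle with compact torus fibres), so any sequence $a_n y$ whose $\pi$-image converges must itself have a convergent subsequence, producing a grid $y_0\in\overline{Ay}\cap\pi^{-1}(x_0)$ to which part~(1) applies. The side remark about Mahler's criterion and covering radii is a perfectly good way to make the properness concrete, though it is not strictly needed once one observes the fibres are compact tori varying continuously with the base point. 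No gaps.
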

\begin{lemma}\label{two ways to prove DP}\quad
\begin{enumerate}
	\item If $y\in Y_d$ is such that $\overline{Ay}\supset\pi^{-1}(x_0)$ for some $x_0\in X_d$ then $y$ is $DP$.
	\item If $y\in Y_d$ is such that there exists $y_0\in Y_d$ and a root group $\set{u_{ij}(t)}_{t\in\bR}<G_0$ such that  $\overline{Ay}\supset \set{u_{ij}(t)y_0 : t\in I}$, where $I\subset \bR$ is a ray, then $y$ is $DP$. 
\end{enumerate}
\end{lemma}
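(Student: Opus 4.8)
The plan is to derive both parts from the Inheritance Lemma~\ref{inheritance lemma} --- which says that $z\in\overline{Ay}$ forces $\overline{P(z)}\subseteq\overline{P(y)}$ --- together with the surjectivity of $N:\bR^d\to\bR$ and a couple of soft facts about cosets of unimodular lattices.

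For part (1): since $N(c,1,\dots,1)=c$, the map $N$ is onto, so for any $c\in\bR$ the grid $y_0=x_0+(c,1,\dots,1)$ lies in $\pi^{-1}(x_0)$ and contains the vector $(c,1,\dots,1)$; hence $c\in P(y_0)$. As $y_0\in\pi^{-1}(x_0)\subseteq\overline{Ay}$, Inheritance gives $c\in\overline{P(y_0)}\subseteq\overline{P(y)}$, and letting $c$ range over $\bR$ shows $\overline{P(y)}=\bR$, i.e.\ $y$ is $DP$.

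For part (2): the computational input is the identity $N(u_{ij}(t)w)=N(w)+t\rho(w)$ for $w\in\bR^d$ and $t\in\bR$, where $\rho(w):=w_j^2\prod_{k\neq i,j}w_k$ (an empty product being $1$); note $\rho(u_{ij}(s)w)=\rho(w)$ for every $s$. By Inheritance (applicable since $u_{ij}(t)y_0\in\overline{Ay}$ for $t\in I$), $\overline{P(y)}$ contains $\bigcup_{t\in I}P(u_{ij}(t)y_0)=\{N(w)+t\rho(w):w\in y_0,\ t\in I\}$. Up to the irrelevant open/closed distinction a ray has the form $[a,\infty)$ or $(-\infty,a]$; take $I=[a,\infty)$, the other case being mirror-symmetric. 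Then for each $w\in y_0$ with $\rho(w)>0$ this set contains the half-line $[N(w)+a\rho(w),\infty)=[N(u_{ij}(a)w),\infty)$, so it is enough to show that $N$ is unbounded below on $\{w'\in y_1:\rho(w')>0\}$, where $y_1:=u_{ij}(a)y_0$ (a grid on which $\rho$ is still defined, being $u_{ij}$--invariant). For this I would invoke the elementary fact that every unimodular lattice $\Lam<\bR^d$ meets every open orthant --- because $\bigcup_{n\ge1}\tfrac1n\Lam$ is dense and orthants are cones --- and apply it to the lattice underlying $y_1$: choosing $\lam$ with $\lam_j>0$, with $\lam_k>0$ for each $k\neq i,j$, and with $\lam_i<0$, we get $\prod_\ell\lam_\ell<0$, and for any fixed $q\in y_1$ the points $q+n\lam$ eventually satisfy $\rho>0$ while $N(q+n\lam)\sim n^d\prod_\ell\lam_\ell\to-\infty$ as $n\to\infty$. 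Hence the union of these half-lines is all of $\bR$, so $\overline{P(y)}=\bR$ and $y$ is $DP$.

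I do not anticipate a genuine obstacle here, the argument being entirely soft; the only points demanding a little care are the sign bookkeeping in the identity $N(u_{ij}(t)w)=N(w)+t\rho(w)$ and the claim that a coset of a unimodular lattice cannot be confined to finitely many coordinate hyperplanes (equivalently, meets every open orthant), which is exactly what forces $N$ to assume arbitrarily large negative values on the relevant region.
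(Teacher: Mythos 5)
Your part (1) matches the paper's one-line argument. Your part (2) is correct but follows a genuinely different route from the paper's. The paper first invokes Ratner's theorem (\cite{R}, Theorem~B) to upgrade the ray $I$ to all of $\bR$, and then a single $w\in y_0$ with all coordinates nonzero already yields $\{N(u_{ij}(t)w):t\in\bR\}=\{N(w)(1+\tfrac{w_j}{w_i}t):t\in\bR\}=\bR$. You instead keep the ray and vary the vector: from each $w\in y_0$ with $\rho(w)>0$ you extract a half-line $[N(u_{ij}(a)w),\infty)\subset\overline{P(y)}$, and then use the elementary fact that a full-rank lattice meets every open orthant (density of $\bigcup_n \tfrac1n\Lambda$ plus conicity of orthants) to make $N(u_{ij}(a)w)$ arbitrarily negative along such $w$. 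This is a real gain in self-containedness --- you dispense with a deep dynamical theorem and replace it by a one-line geometric observation about lattices, at the cost of a slightly longer computation. One small remark: for the case $I=(-\infty,a]$, the literal mirror (using $\{\rho<0\}$) only works for $d\ge3$, since for $d=2$ one has $\rho(w)=w_j^2\ge0$; it is cleaner to observe that for $\rho(w)>0$ the set $\{N(w)+t\rho(w):t\le a\}$ is the downward half-line $(-\infty,\,N(u_{ij}(a)w)]$ and argue that $N$ is unbounded above on $\{\rho>0\}$ via the all-positive orthant, so the same trick covers every $d\ge2$.
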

\begin{proof}
To see (1) note that from the inheritance lemma it follows that $\forall v\in\bR^d$ $P(x_0+v)\subset \overline{P(y)}$. Clearly $\cup_{v\in\bR^d}P(x_0+v)=\bR$. To see (2) note that it follows from ~\cite{R} theorem B, that $$\set{u_{ij}(t)y_0:t\in\bR}\subset \overline{\set{u_{ij}(t)y_0 : t\in I}}.$$
Let $w\in y_0$ be a vector all of whose coordinates are nonzero. By the inheritance lemma  $$\overline{P(y)}\supset\set{N\pa{u_{ij}(t)w} : t\in \bR}=\set{N(w)\pa{\frac{w_j}{w_i}t+1}:t\in\bR}=\bR.$$
\end{proof}
\subsection{Existence of $GDP$ lattices for $d\ge 3$} The proof of the following theorem is based on the ideas presented in ~\cite{LW}.
\begin{theorem}\label{existence of GDP}
If $x,x_0\in X_d$ ($d\ge 3$), $Ax_0$ is compact and $x_0\in \overline{Ax}\setminus Ax$, then $x$ is $GDP$.
\end{theorem}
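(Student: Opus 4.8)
The plan is to exhibit, inside $\overline{Ax}$, a set on which Lemma \ref{two ways to prove DP} can be applied. Since $Ax_0$ is compact, by Theorem \ref{compact lattice orbits} we may assume (after translating $x_0$ by an element of $A$, which does not change $\overline{Ax}$ containing it, nor the conclusion we want via Corollary \ref{inheritance 2}(2)) that $x_0$ is, up to scaling, the geometric embedding of a lattice in a totally real degree-$d$ field $K$, and that $A_0 := Stab_A(x_0)$ has $\log(A_0)$ of finite index in $\Om_K$, hence a lattice in $\goa$ of rank $d-1 \ge 2$. The key structural input is Corollary \ref{independence}: for every root $\al \in \Phi$ the image $\al(A_0)$ is dense in $\bR$; equivalently, the unipotent root group $U_\al = \set{u_\al(t)}$ is normalized by $A_0$ with the conjugation weights $\set{e^{\al(a)} : a \in A_0}$ dense in $\bR_{>0}$.

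First I would use the hypothesis $x_0 \in \overline{Ax} \setminus Ax$ to produce a nontrivial unipotent "extra" direction at $x_0$. Pick $a_n \in A$ with $a_n x \to x_0$ but $a_n x \notin A x_0$; writing $a_n x = g_n x_0$ with $g_n \to e$ in $G_0$ and $g_n \notin A \cdot Stab$, decompose $g_n = c_n u_n^+ u_n^-$ relative to a fixed regular $b \in A_0$ (possible since $g_n$ is eventually near $e$), where $c_n$ centralizes $b$, hence $c_n \in A$, and $u_n^\pm$ lie in the horospherical subgroups $U^\pm(b)$. Absorbing $c_n \in A$, we may assume $g_n \in U^+(b)U^-(b)$ and $g_n \ne e$. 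Now the standard non-divergence / return argument in the style of \cite{LW}: renormalize by powers of $b$ (and use density of $\al(A_0)$ to pick elements of $A_0$ that expand the surviving coordinate of $g_n$ while keeping others bounded), extract a limit, and conclude that $\overline{Ax}$ contains $u_\al(t)x_0$ for all $t$ in some ray $I \subset \bR$, for at least one root $\al$. This is exactly the hypothesis of Lemma \ref{two ways to prove DP}(2), provided $\al$ corresponds to a pair $i \ne j$ rather than to an index $k$; a short separate argument handles the case where the extra direction is a translation root group $u_k(t)$, by instead combining it with a companion root to land in the hypothesis of part (1), i.e. showing $\overline{Ax} \supset \pi^{-1}(x_0)$.

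Granting that, Lemma \ref{two ways to prove DP} gives that $x$ itself (viewed via its grids) is $DP$; but we need $GDP$, i.e. every grid $y \in \pi^{-1}(x)$ is $DP$. For this I would run the entire argument one level up in $Y_d$ rather than in $X_d$: the orbit $Ax_0$ compact forces (Corollary \ref{compact grid orbits}) every rational grid over $x_0$ to have a compact $A$-orbit, and for an arbitrary grid $y$ over $x$ one takes a limit point $y_0 \in \overline{Ay}$ lying over $x_0$; since $\pi(y_0) = x_0$ has a compact orbit and $\overline{Ay} \supsetneq Ay_0$ acquires an extra unipotent direction by the same renormalization argument, Lemma \ref{two ways to prove DP}(2) (or (1)) applies to $y$ directly, whence $\overline{P(y)} = \bR$. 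Running over all $y \in \pi^{-1}(x)$ gives $GDP$.

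The main obstacle is the extraction step: showing that the difference between $\overline{Ax}$ and the single compact orbit $Ax_0$ genuinely produces a one-parameter unipotent orbit $\set{u_\al(t)x_0 : t \in I}$ inside $\overline{Ax}$, rather than merely some uncontrolled accumulation. This is where higher rank is essential — one must choose the renormalizing elements from the rank-$(d-1)$ lattice $\log(A_0)$ so as to expand precisely the nonzero component of $g_n$ in a chosen root space while controlling the others, using the density from Corollary \ref{independence}; the bookkeeping to guarantee the limit is a nontrivial unipotent (and to identify which root $\al$ occurs, and to treat the $u_k$ case) is the delicate part, and is the content imported from \cite{LW}.
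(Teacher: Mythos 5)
Your outline captures the paper's argument for the ``rational'' branch: renormalize by elements of $A_0 = Stab_A(y_0)$ (chosen regular), push the $g_n$ to a nontrivial $u\in U^\pm$, prune $\Phi_u$ down to a single root, then spread $u_\al(t_0)y_0$ along a ray using Corollary \ref{independence} and apply Lemma \ref{two ways to prove DP}(2). You also correctly note that the argument must be run in $Y_d$, not $X_d$, to get $GDP$ rather than just a statement about the lattice.

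There is, however, a genuine gap. When you ``take a limit point $y_0\in\overline{Ay}$ lying over $x_0$'' and then invoke ``the same renormalization argument,'' you are implicitly assuming $Ay_0$ is compact. But by Corollary \ref{compact grid orbits}, $Ay_0$ is compact only when $y_0$ is \emph{rational}; for an irrational $y_0$ over $x_0$, $Stab_A(y_0)$ is too small (typically trivial) to contain a regular $b$ with $b y_0 = y_0$, so the whole renormalization scheme — $b^{k_n}y_n = (b^{k_n}u_n^+b^{-k_n})(b^{k_n}u_n^-b^{-k_n})y_0$ — collapses, since $b^{k_n}y_0$ no longer stays put. The paper therefore splits into two cases. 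If $F_0 = \overline{Ay}\cap\pi^{-1}(x_0)$ contains an irrational grid $y_0$, it invokes a separate result (\cite{Sh}, Lemma 4.8, a Berend-type theorem on the $A_0$-action on the torus $\pi^{-1}(x_0)$) giving $\overline{Ay_0}\supset\pi^{-1}(x_0)$, and concludes via Lemma \ref{two ways to prove DP}(1). Only when $F_0$ consists entirely of rational grids does the renormalization argument run. Your proposal has no mechanism for the irrational case and cannot reduce to the rational one; this branch needs a different input.

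A smaller point: your treatment of the possibility that the surviving root $\al$ is a translation root $u_k$ (``combine it with a companion root to land in part (1)'') is vaguer than what is actually needed, and in fact unnecessary. In the rational branch the translation roots cannot occur: if $u_k(t_0)y_0\in F$ then $u_k(t_0)y_0$ lies over the same lattice $x_0$, hence belongs to $F_0$, yet for generic $t$ along the ray produced in Claim 3 the grid $u_k(t)y_0$ is irrational, contradicting the standing assumption that $F_0$ is all rational. So in that branch $\al$ is automatically of $ij$-type and Lemma \ref{two ways to prove DP}(2) applies directly — no companion-root argument is needed or available.
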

\begin{proof}
Let $y\in \pi^{-1}(x)$. Consider $F=\overline{Ay}$ and $F_0=F\cap\pi^{-1}(x_0)$. Note that from the compactness of the fibers of $\pi$ and the assumptions of the theorem it follows that $F_0\ne\emptyset$. In ~\cite{Sh} (lemma 4.8) it is shown that any irrational grid $y_0\in \pi^{-1}(x_0)$, satisfies $\overline{Ay_0}\supset \pi^{-1}(x_0)$. Hence by lemma ~\ref{two ways to prove DP} (1), $y_0$ is $DP$. Hence, if $F_0$ contains an irrational grid then $y$ is $DP$ by corollary ~\ref{inheritance 2} (1).\\
Assume then that $F_0$ contains only rational grids and let $y_0\in F_0$ (this could happen for example if $y$ is a rational grid).
By corollary ~\ref{compact grid orbits}, $Ay_0$ is compact. Denote $A_0=Stab_A\pa{y_0}.$ 
Choose a regular element $b\in A_0$. Let $U^-=U^-(b),U^+=U^+(b),$ be the corresponding stable and unstable horospherical subgroups of $G$. Any point which is close enough to $y_0$ in $Y_d$, has a unique representation of the form $au^+u^-y_0$, where $a\in A,u^+\in U^+$ and $u^-\in U^-$ are in corresponding neighborhoods of the identity. Choose a sequence $y_n\to y_0$ from the orbit $Ay$. We may assume that 
\begin{equation}\label{transverse convergence}   
 y_n=a_nu_n^+u_n^-y_0 \in F
\end{equation}
where $a_n,u_n^+,u_n^-\to e$. We may further assume that $a_n=e$ for all $n$, for if not, replace $y_n$ by $a_n^{-1}y_n$. The fact that $y_0$ is not in $Ay$ implies that the pairs $(u_n^+,u_n^-)$ are nontrivial for any $n$. Our first goal is to show:\\
\textbf{Claim} 1: \textit{There exist a point in $F$ of the form $uy_0$, where $u\ne e$ is in $U^+$ or $U^-$.}\\
If there exists an $n$ with one of $u_n^+$ or $u_n^-$, being trivial, then the claim follows. If not, we denote for any $n$ by $k_n$, the least integer such that the maximum of the absolute values of the entries of $b^{k_n}u_n^+b^{-k_n}$ is greater than $1$. It then follows that this absolute value lies in some interval of the form $[1,M]$ (where $M$ only depends on the choice of $b$). Since $u_n^+\to e$ we must have $k_n\to\infty$. It is easy to see that the convergence $b^{k_n}u^-b^{-k_n}\to e$, for $u^-\in U^-$, is uniform on compact subsets of $U^-$. Hence in particular, $b^{k_n}u_n^-b^{-k_n}\to e$. Thus after going to a subsequence and abusing notation, we may assume that $b^{k_n}u_n^+b^{-k_n}\to u$, where $e\ne u\in U^+.$ Hence
$$\lim b^{k_n}y_n=\lim b^{k_n}u_n^+b^{-k_n}b^{k_n}u_n^-b^{-k_n}y_0=uy_0\in F$$
and claim 1 follows.\\
\textbf{Claim} 2: \textit{There exist a root $\al\in \Phi$ and $t_0\ne 0$ such that $u_\al(t_0)y_0\in F$.}\\
Let $u$ be as in claim 1. We denote for $g\in G$, $$\Phi_g=\set{\al\in\Phi :\textrm{ the entry corresponding to $\al$ in $g$ is nonzero}}.$$ If $\Phi_u$ contains only one root, claim 2 follows. If not, there exists a one parameter semigroup $\{a_t\}_{t\ge 0}<A$ such that $\Phi_u$ is the union of two non empty disjoint sets, $\Phi_u^-,\Phi_u^0$, such that for $\al\in\Phi_u^-$, $\al(a_1)<0$, while for $\al\in\Phi_u^0$, $\al(a_1)=0$ (see ~\cite{LW} step 4.5 for details). It follows that for any sequence $t_n\to\infty$, $a_{t_n}ua_{t_n}^{-1}\to u'$, where $\Phi_{u'}=\Phi_u^0$, which is strictly smaller then $\Phi_u$. Since $Ay_0\simeq A/A_0$ is a $(d-1)$-torus, we can always find a sequence $t_n\to\infty$, such that $a_{t_n}y_0\to y_0$. Thus $$\lim a_{t_n}uy_0=\lim a_{t_n}ua_{t_n}^{-1}a_{t_n}y_0=u'y_0\in F.$$
Repeating this process a finite number of times, we end up with a root $\al$ and some nonzero real number $t_0$, such that $u_\al(t_0)y_0\in F$ and claim 2 follows.\\
\textbf{Claim} 3: \textit{ There exists a ray $I\subset \bR$ such that $\set{u_\al(t)y_0 : t\in I}\subset F$}.\\
By corollary ~\ref{independence}, we have that $\set{\al(a) : a\in A_0}$ is dense in $\bR$. It follows that $I=\overline{\set{e^{\al(a)}t_0 : a\in A_0}}$ is a ray. We have 
$$\set{au_\al(t_0)y_0 : a\in A_0} = \set{au_\al(t_0)a^{-1}y_0 : a\in A_0} = \set{u_\al(e^{\al(a)}t_0)y_0 : a\in A_0}\subset F.$$
Claim 3 now follows from the fact that $F$ is closed.\\
Note that from our assumption that $F_0$ contain only rational grids, it follow that the root group $u_\al(t)$ is contained in $G_0$. It now follows from lemma ~\ref{two ways to prove DP} (2) that $y$ is $DP$ and the theorem follows.
\end{proof}
\begin{corollary}\label{dense imply GDP}
For $d\ge 3$, any lattice with a dense $A$ orbit is $GDP$.
\end{corollary}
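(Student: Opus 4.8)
The plan is to deduce this immediately from Theorem \ref{existence of GDP}, so the whole task reduces to exhibiting, inside the closure of the given dense orbit, a lattice with a compact $A$-orbit that does not already lie on that orbit. Let $x\in X_d$ have a dense $A$-orbit, i.e. $\overline{Ax}=X_d$. First I would record the elementary observation that $Ax$ itself cannot be compact: a compact orbit is closed, so if $Ax$ were compact then $X_d=\overline{Ax}=Ax$, contradicting the non-compactness of $X_d$ for $d\ge 2$ (Mahler's criterion — lattices with arbitrarily short nonzero vectors escape every compact set). Hence $Ax$ is a non-compact orbit.

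Next I would invoke Theorem \ref{compact lattice orbits} to get hold of a single lattice $x_0$ with $Ax_0$ compact: pick any totally real number field $K$ of degree $d$ (such fields exist for every $d$), let $x_0$ be the suitably normalized geometric embedding $\vphi(\cO_K)$ of its ring of integers; then $Ax_0$ is compact by that theorem. Since $\overline{Ax}=X_d$, we certainly have $x_0\in\overline{Ax}$. Moreover $x_0\notin Ax$: the orbit $Ax_0$ is compact while $Ax$ is not, so these two $A$-orbits are distinct, hence disjoint, and in particular $x_0$ does not lie on $Ax$. Therefore $x_0\in\overline{Ax}\setminus Ax$, $Ax_0$ is compact, and $d\ge 3$, so all hypotheses of Theorem \ref{existence of GDP} are met and we conclude that $x$ is $GDP$.

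I do not expect a genuine obstacle here: the corollary is a short packaging of Theorem \ref{existence of GDP}, and the only points that require any thought are the two verifications above — that $Ax$ fails to be compact (immediate from non-compactness of $X_d$) and that compact $A$-orbits exist at all in $X_d$ (immediate from Theorem \ref{compact lattice orbits} together with the standard existence of totally real fields of each degree). If anything deserves a sentence of care it is the disjointness step: distinct $A$-orbits are automatically disjoint, which is what upgrades "$x_0\ne$ anything on $Ax$" to "$x_0\in\overline{Ax}\setminus Ax$".
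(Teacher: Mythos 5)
Your proof is correct and takes essentially the same route as the paper: the corollary is reduced immediately to Theorem~\ref{existence of GDP}, with the only work being to verify that some lattice $x_0$ with compact $A$-orbit lies in $\overline{Ax}\setminus Ax$, which you handle correctly by noting that $Ax$ is not compact (else $X_d=\overline{Ax}=Ax$ would be compact). The paper phrases the deduction through Corollary~\ref{inheritance 2}(2) as well, but that is just a repackaging; your direct application of Theorem~\ref{existence of GDP} to the pair $(x,x_0)$ is the same argument.
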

\begin{proof}
This is a consequence of theorem ~\ref{existence of GDP}, and corollary ~\ref{inheritance 2}(2).
\end{proof}
The following lemma is well known. We give the outline of a proof.
\begin{lemma}\label{expanding leaf generic}
For any $d\ge 2$ and almost any $v\in \bR^{d-1}$ (with respect to Lebesgue measure) $\overline{Ax_{v}}=\overline{Az_{v}}=X_d$.
\end{lemma}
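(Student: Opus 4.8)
The statement to prove is that for almost every $v\in\bR^{d-1}$, the orbits $\overline{Ax_v}$ and $\overline{Az_v}$ equal all of $X_d$. The plan is to exploit the fact that the maps $v\mapsto x_v$ and $v\mapsto z_v$ parametrize (pieces of) an expanding horospherical subgroup for a suitable element of $A$, together with the ergodicity of that element's action on $X_d$ with respect to Haar measure. Concretely, fix a regular $a\in A$ for which $\set{h_v : v\in\bR^{d-1}}$ is the full unstable horospherical subgroup $U^+(a)$ (e.g. $a=\mathrm{diag}(e^{(d-1)t},e^{-t},\dots,e^{-t})$ with $t>0$ makes the last column expand); similarly $\set{g_v}$ is $U^+(a')$ for $a'=\mathrm{diag}(e^{t},\dots,e^{t},e^{-(d-1)t})$ or, equivalently, $U^-$ of the first element transposed. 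The single diagonal element $a$ acts ergodically on $(X_d,\text{Haar})$ since the geodesic flow (more precisely, any one-parameter $\bR$-diagonalizable flow that is not contained in a proper rational parabolic direction) is mixing, hence ergodic, on $SL_d(\bR)/SL_d(\bZ)$.

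The first step is a standard Fubini/Hedlund-type argument: by the Birkhoff ergodic theorem applied to a countable basis of open sets of $X_d$, the set of $x\in X_d$ whose forward $a$-orbit is dense (equidistributed, even) has full Haar measure. Call this full-measure set $\cG$. The second step is to transfer genericity from $X_d$ to the transversal $\set{x_v}$. One way: the map $(v,s)\mapsto a_s h_v\bZ^d$ (where $a_s$ runs over a small piece of $A$ and $v$ over a small ball) is a diffeomorphism onto an open subset of $X_d$, and in these coordinates Haar measure is (locally) a product of Lebesgue measure in $v$ with Lebesgue measure in $s$ (up to a smooth density, by the structure of the Iwasawa/Bruhat decomposition and unimodularity). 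Hence $\cG$ meeting this chart in full measure forces, for Lebesgue-a.e.\ $v$, the slice $\set{a_s h_v\bZ^d : s}$ to meet $\cG$; but membership in $\cG$ is an $A$-invariant property of the orbit, so in fact $x_v=h_v\bZ^d\in\cG$, i.e.\ $\overline{Ax_v}\supseteq\overline{\set{a^n x_v : n\ge 0}}=X_d$. The same argument with $a'$ and $g_v$ handles $z_v$. Taking the intersection of the two full-measure sets of $v$ finishes the proof.

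The main obstacle — really the only substantive point — is justifying the change-of-variables claim that Haar measure on $X_d$ disintegrates as (Lebesgue on the $h_v$ parameter) $\times$ (the $A$-direction), i.e.\ that the parametrization $v\mapsto x_v$ is ``non-singular'' with respect to Haar. This is where one must be slightly careful: $\set{h_v}$ is $(d-1)$-dimensional while $X_d$ is $(d^2-1)$-dimensional, so one needs to complete $h_v$ to a full local chart using the centralizer $A$ and the complementary unipotent (stable) directions, and check the Jacobian is smooth and nonvanishing. This follows from the fact that $\goa\oplus\gou^+\oplus\gou^-$ (with $\gou^\pm=\mathrm{Lie}\,U^\pm(a)$) is all of $\gog_0$ when $a$ is regular, together with the fact that left Haar measure on $G_0$ pushes to Haar on $X_d$; no hard analysis is needed, but the bookkeeping is the part a referee would want spelled out. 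I expect the write-up to cite this as a standard ``expanding horospherical coordinates are non-singular'' fact (it is the same mechanism behind Hedlund's theorem and its higher-rank analogues), and indeed the paper flags the lemma as well known and promises only an outline.
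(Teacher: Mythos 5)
Your argument and the paper's are the same in substance: choose a one-parameter diagonal flow $a_t$ whose expanding horospherical subgroup is exactly the group $\set{h_v}$ (resp.\ $\set{g_v}$), use ergodicity of $a_t$ on $X_d$ to conclude that Haar-a.e.\ point has a dense forward $a_t$-orbit, transfer this genericity to Lebesgue-a.e.\ $v$ via a local $(\text{centralizer})\times U^-\times U^+$ chart of $X_d$, and cover $\bR^{d-1}$ by countably many such charts. Your intuition that this is the Hedlund mechanism in disguise is correct.

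That said, there are three slips worth flagging. First, your two displayed elements are swapped: the element that expands every entry in the last column of $h_v$ is $a_t=\mathrm{diag}(e^t,\dots,e^t,e^{(1-d)t})$ (the paper's choice), whereas $\mathrm{diag}(e^{(d-1)t},e^{-t},\dots,e^{-t})$ expands only the $(1,d)$ entry of $h_v$ and is in fact the right choice for the $g_v$ family. Second, and relatedly, such an element cannot be \emph{regular} for $d\ge 3$, and you should not ask for regularity: a regular $a$ has $\dim U^+(a)=d(d-1)/2>d-1$, so $\set{h_v}$ would be a proper subgroup of $U^+(a)$, and the Fubini argument would only say something about a.e.\ point of $U^+(a)$, not about Lebesgue-a.e.\ $v\in\bR^{d-1}$. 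The point is precisely to take a \emph{singular} direction in $A$ whose unstable horospherical is exactly $\set{h_v}$. Third, in the transfer step you invoke only $A$-invariance of your set $\cG$; once you complete $\set{h_v}$ to a full chart $(c,u^-,h_v)\mapsto cu^-h_vx$ as you indicate, the invariance you actually need is that $\cG$ is invariant under the centralizer \emph{and} under $U^-(a_t)$, equivalently that two nearby points with the same $U^+$-coordinate have the same forward $a_t$-orbit closure. This $U^-$-invariance (which holds because $a_t u^- a_t^{-1}\to e$ as $t\to\infty$) is the real content of the transfer and is exactly the observation the paper's outline records; $A$-invariance alone does not let you pass from ``the $(c,u^-)$-slice over $v$ meets $\cG$'' to ``$x_v\in\cG$.''
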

\begin{proof}
Let us consider lattices of the form $x_v$ for example. Denote $$a_t=diag(e^t,\dots,e^t,e^{(1-d)t}).$$ Note that for any positive $t$ the unstable horospherical subgroup of $a_t$ is (recall the notation of $\S\S$~\ref{DA}) $U^+(a_t)=\set{h_v:v\in\bR^{d-1}}$.  For any point $x\in X_d$ there exists neighborhoods $W^+_x,W^-_x,W^0_x$ of the identity elements in the groups $U^+(a_t),U^-(a_t)$ and the centralizer of $a_t$, such that the map $W^0_x\times W^-_x\times W^+_x\to X_d$ given by $(c,g,h_v)\mapsto cgh_vx$ is a diffeomorphism with a neighborhood, $W_x$, of $x$ in $X_d$. Note that if $x_i=c_ig_ih_vx, i=1,2$ are two points in $W_x$ having the same $U^+$ coordinate, then the trajectory $\set{a_tx_1}_{t\ge 0}$ is dense in $X_d$ if and only if $\set{a_tx_2}_{t\ge 0}$ is dense too.
As the action of $a_t$ on $X_d$ is ergodic we know that for almost any $x'\in W_x$, $\set{a_tx'}_{t\ge 0}$ is dense in $X_d$ and from analyzing the structure of the Haar measure on $X_d$ restricted to $W_x$ we conclude that for almost any $v$ in the neighborhood of zero in $\bR^{d-1}$ corresponding to $W^+_x$, $\set{a_th_vx}_{t\ge 0}$ is dense in $X_d$. We abuse notation and think of $W_x^+$ as contained in $\bR^{d-1}$.\\
To finish the argument we find a countable collection $v_i\in\bR^{d-1}$ such that the neighborhoods $W_{x_{v_i}}$ satisfy $\bR^{d-1}=\cup_i\pa{v_i+W_{x_{v_i}}}$ and note that $W_{x_{v_i}}x_{v_i}=\set{x_w:w\in v_i+W_{x_{v_i}}}.$
\end{proof}
\begin{corollary}\label{almost any GDP}
Let $d\ge 3$
\begin{enumerate}
	\item  Almost any lattice $x\in X_d$ (with respect to Haar measure) is $GDP$.
	\item For almost any $v\in\bR^{d-1}$ (with respect to Lebesgue measure), both $x_{v},z_{v}\in X_d$ are $GDP$.
	\item If $v\in\bR^{d-1}$ is such that $x_v$ (resp $z_v$) is $GDP$, then ~\eqref{mu(x)1} (resp ~\eqref{mu(z)2}) is satisfied.
\end{enumerate}
\end{corollary}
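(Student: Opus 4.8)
The plan is to derive parts (1) and (2) directly from the rigidity results already in place, and to obtain part (3) by an elementary translation into the language of Diophantine approximation.

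For (1), I would use that the action of the diagonal group $A$ on $X_d$ is ergodic with respect to the Haar measure — a classical fact, which already underlies the proof of Lemma~\ref{expanding leaf generic}, where ergodicity of the one-parameter subgroup $a_t$ is invoked and $A$ contains such subgroups. Since $X_d$ is second countable, picking a countable base $\{U_k\}$ of nonempty open sets, each set $AU_k$ is open, nonempty and $A$-invariant, hence of full Haar measure; intersecting over $k$ yields the full-measure set of lattices with a dense $A$-orbit, and Corollary~\ref{dense imply GDP} says every such lattice is $GDP$. For (2), Lemma~\ref{expanding leaf generic} already gives that for Lebesgue-almost every $v\in\bR^{d-1}$ one has $\overline{Ax_v}=\overline{Az_v}=X_d$, so $x_v$ and $z_v$ have dense $A$-orbits and Corollary~\ref{dense imply GDP} makes both of them $GDP$.

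For (3) I would write down the dictionary between $GDP$ and \eqref{mu(x)1}, say for $x_v$ (the case of $z_v$ being parallel). Representing a vector of $\bR^d$ as (first $d-1$ coordinates, last coordinate), the lattice $x_v$ consists of the points $(\vec k+nv,\,n)$, $\vec k\in\bZ^{d-1}$, $n\in\bZ$. Given $\vec\ga\in\bR^{d-1}$ and a non-integer $s$, consider the grid $y=x_v+(-\vec\ga,\,s)\in\pi^{-1}(x_v)$; its points are $(\vec k+nv-\vec\ga,\,n+s)$, and since the last coordinate $n+s$ never vanishes, minimizing $|N|$ over $\vec k$ gives $N(y)=\inf_{n\in\bZ}|n+s|\prod_{i=1}^{d-1}\idist{nv_i-\ga_i}$. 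As $x_v$ is $GDP$, $\overline{P(y)}=\bR$, so this infimum is $0$. What \eqref{mu(x)1} asserts beyond $N(y)=0$ is that the infimum is approached as $|n|\to\infty$: for a fixed $n$ the term $|n+s|\prod_i\idist{nv_i-\ga_i}$ is strictly positive unless $nv_{i_0}\equiv\ga_{i_0}\pmod 1$ for some $i_0$, so if $\vec\ga$ is such that no integer $n$ satisfies such a congruence, the infimum over any bounded set of $n$'s is positive and hence $\liminf_{|n|\to\infty}|n|\prod_i\idist{nv_i-\ga_i}=0$, which is \eqref{mu(x)1}. For the exceptional $\vec\ga$, if $n_0v_{i_0}\equiv\ga_{i_0}\pmod 1$ then the substitution $n\mapsto n+n_0$ turns \eqref{mu(x)1} for $\vec\ga$ into a statement of the same shape with the $i_0$-th inhomogeneous parameter replaced by $0$ (and the others shifted accordingly); running this over the finitely many exceptional coordinates leaves one either at a coordinate with $v_{i_0}\in\bQ$, where $\idist{nv_{i_0}}=0$ for infinitely many $n$ and the claim is immediate, or at a non-exceptional configuration handled as above — using, at the step where this is needed, the full density of $P(\cdot)$ rather than just that $0$ lies in its closure, to rule out that the small products come from a bounded range of $n$. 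For $z_v$ the picture is the same: its points are $(k_0+\sum_i k_iv_i,\,k_1,\dots,k_{d-1})$, one uses the grid $z_v+(-\ga,\,\vec s)$ with $\vec s\in(0,1)^{d-1}$, and $N$ of that grid equals $\inf_{\vec k\in\bZ^{d-1}}\idist{\sum_i k_iv_i-\ga}\prod_i|k_i+s_i|$; since $|k_i+s_i|\ge\idist{s_i}>0$ and is comparable to $\max(1,|k_i|)$, the vanishing of this infimum yields \eqref{mu(z)2}, again with some care for exceptional $\ga$.

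The point to stress is that, for this corollary, the hard part is entirely upstream: parts (1) and (2) are one-line consequences of Corollary~\ref{dense imply GDP} (which rests on Theorem~\ref{existence of GDP} and the rigidity machinery of~\cite{LW}) together with standard ergodicity and Lemma~\ref{expanding leaf generic}. The only genuinely new argument is the translation in part (3), and there the single delicate point is the bookkeeping for inhomogeneous shifts lying on the countably many affine-rational exceptional loci; I would isolate that in a short auxiliary lemma rather than embed it in the main computation.
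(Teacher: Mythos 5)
Parts (1) and (2) of your proof are word for word what the paper does: (1) is ergodicity of $A$ plus Corollary~\ref{dense imply GDP}, and (2) is Lemma~\ref{expanding leaf generic} plus Corollary~\ref{dense imply GDP}. The paper leaves (3) to the reader, so your treatment of (3) is the only substantive new content, and it is there that I see problems.

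For $x_v$, your ``shift and reduce'' handling of exceptional $\vec\ga$ is not well-founded. After replacing $\ga_i$ by $\ga_i'=\ga_i-n_0v_i$ you arrange $\ga_{i_0}'\equiv 0$; but a later shift by $n_1$ produces $\ga_{i_0}''=-n_1v_{i_0}$, which is again exceptional at $i_0$ (take $n=-n_1$), so the recursion does not terminate in the way you describe. The good news is that this detour is unnecessary: for each fixed $n$, the infimum over $\vec k$ of \emph{nonzero} values $\prod_i|k_i+nv_i-\ga_i|$ is bounded below by $\prod_i m_i(n)$, where $m_i(n)=\idist{nv_i-\ga_i}$ when this is positive and $m_i(n)=1$ when $nv_i-\ga_i\in\bZ$. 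Since $\overline{P(y)}=\bR$ furnishes $w^{(m)}\in y$ with $0<|N(w^{(m)})|\to 0$, and $|N(w^{(m)})|\ge|n_m+1/2|\prod_i m_i(n_m)$, the $n_m$ cannot lie in a finite set, hence $|n_m|\to\infty$; then $|n_m|\prod\idist{n_mv_i-\ga_i}\le\frac{|n_m|}{|n_m+1/2|}|N(w^{(m)})|\to 0$ gives \eqref{mu(x)1} with no case split at all.

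The $z_v$ case has a more serious gap that your parenthetical ``with some care for exceptional $\ga$'' does not reach. Your dictionary gives $\idist{\sum k_iv_i-\ga}\prod_i|k_i+s_i|\to 0$ along a sequence with $\prod_i|k_i+s_i|\to\infty$. But \eqref{mu(z)2} asks for $\liminf$ as $\prod_i|n_i|\to\infty$, which requires every $n_i$ to be nonzero, while $|k_i+s_i|\asymp\max(1,|k_i|)$ is bounded away from $0$ at $k_i=0$. Thus the density of $P(y)$ could, a priori, be witnessed entirely by $\vec k$ with some coordinate equal to $0$, and such $\vec k$ contribute nothing to the $\liminf$ in \eqref{mu(z)2}. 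This degeneracy is not of the ``exceptional $\ga$'' type; you need an additional argument that produces, for every $M$, witnesses $\vec k$ with all $k_i\ne 0$ and $\prod|k_i|\ge M$, and your sketch does not supply one. I would isolate this point explicitly and prove it, rather than fold it into the same ``care'' remark used for the inhomogeneous shifts.
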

\begin{proof}
(1) follows from the ergodicity of the $A$ action on $X_d$ which in particular means that almost any point has a dense orbit and corollary ~\ref{dense imply GDP}. (2) follows from lemma ~\ref{expanding leaf generic} and corollary ~\ref{dense imply GDP}. (3) is left to be verified by the reader.
\end{proof}
\section{A density result}\label{density result}
Let $x_0\in X_3$ be a point with a compact $A$-orbit. We shall use the following facts: It follows from Lemma 4.1 of ~\cite{LW}, that the orbit of $x_0$ under any root group $u_{ij}(t)$, is dense in $X_3$, moreover Theorem B of ~\cite{R}, implies that in fact $\set{u_{ij}(t)x_0}_{t\in I}$ is dense in $X_3$, for any ray $I\subset \bR$. It follows from corollary 1.4 in ~\cite{LW}, that if $b\in G_0$ is lower or upper triangular but not diagonal, then $\overline{Abx_0}= X_3$. A more careful look yields the following theorem. The author is indebted to Elon Lindenstrauss, for valuable ideas appearing in the proof.
\begin{theorem}\label{density}
Let $x_0\in X_3$ be a lattice with a compact $A$-orbit. 
If 
$p= \pa{
\begin{array}{lll}
\al&0&0\\
\be&\ga&\del\\
\eta&\tau&\mu
\end{array}
}\in G_0$ is such that both $\tau,\mu\ne 0$, then $\overline{Apx_0}= X_3$.  
\end{theorem}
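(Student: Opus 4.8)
The plan is to reduce the matrix $p$ to one of the already-handled cases (a non-diagonal triangular matrix, for which corollary 1.4 of \cite{LW} gives density) by using the $A$-action together with the compactness of $Ax_0$ and the density statements for root-group orbits. The structural feature of $p$ is that its first row is $(\al,0,0)$: so $p$ is block lower-triangular with respect to the decomposition $\bR^3 = \bR e_1 \oplus \textrm{span}(e_2,e_3)$, and the hypothesis $\tau,\mu\ne 0$ says that the $2\times 2$ block $\left(\begin{smallmatrix}\ga&\del\\ \tau&\mu\end{smallmatrix}\right)$ has a nonzero lower row. First I would dispose of trivialities: if the full orbit closure $\overline{Apx_0}$ already equals $X_3$ we are done, so it suffices to produce inside $\overline{Apx_0}$ some point whose orbit closure is all of $X_3$, e.g. a point of the form $bx_0$ with $b$ lower or upper triangular but not diagonal, or a point lying on a full root-group orbit through a compact-orbit point.

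The key steps, in order. (1) Use the Iwasawa-type / Bruhat decomposition: write $p = b_- \, c \, u$ where $c\in A$ (or more generally centralizes the relevant one-parameter subgroup), $u$ is upper unipotent and $b_-$ is lower unipotent, at least after possibly permuting coordinates — but we must stay inside the genuine group $A$ of positive diagonals, so instead I would argue directly. Pick the one-parameter semigroup $a_s = \exp(s\,\mathbf{t})$ with $\mathbf t = (2,-1,-1)$, whose unstable horospherical group is the first-column unipotent $\{u_{21}(\ast)u_{31}(\ast)\}$ and whose stable horospherical group is the first-row unipotent $\{u_{12}(\ast)u_{13}(\ast)\}$, with centralizer the block-diagonal $S(GL_1\times GL_2)$ part of $A$. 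Conjugating $p$ by $a_s$ and letting $s\to-\infty$ contracts the first-row entries of $p$ to zero; since those are already zero, this does nothing, but letting $s\to+\infty$ we can instead analyze $a_s p a_s^{-1}$ to move mass into the $e_1$-direction. (2) The real content: because $Ax_0$ is compact, $\{a_s x_0\}$ is a compact torus orbit, so along a suitable sequence $s_n\to\infty$ we have $a_{s_n} x_0 \to x_0'$ with $Ax_0'$ still compact, and $a_{s_n} p a_{s_n}^{-1} \to p'$ where $p'$ has killed the "unstable" off-block entries and retains a non-diagonal, non-trivial triangular shape coming from the surviving block $\left(\begin{smallmatrix}\ga&\del\\ \tau&\mu\end{smallmatrix}\right)$ and the first column $(\al;\be;\eta)$; then $p'x_0' \in \overline{Apx_0}$. (3) Now apply the $d=3$ input facts recalled just before the theorem: the resulting $p'x_0'$ is of the form $b x_0'$ with $b\in G_0$ lower-triangular (or, after a further renormalization using $u_{ij}$-orbit density, a point on a full root-group orbit through a compact-orbit point), so by corollary 1.4 of \cite{LW} (or by Theorem B of \cite{R} for the root-group case) $\overline{A\,p'x_0'} = X_3$. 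Since $\overline{A\,p'x_0'}\subset \overline{Apx_0}$ and the latter is closed and $A$-invariant, we conclude $\overline{Apx_0} = X_3$.

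I expect the main obstacle to be step (2): carefully choosing the contracting one-parameter subgroup so that (a) it lies in the honest positive-diagonal group $A$, (b) it contracts precisely the entries of $p$ that obstruct a clean triangular form while \emph{not} contracting away the crucial entries $\tau,\mu$ that make the surviving block non-diagonal, and (c) the limiting point $x_0'$ of the compact torus orbit is again a compact-$A$-orbit point so that the cited density results apply to $x_0'$ and not merely to $x_0$. One has to track which root-entries of $a_s p a_s^{-1}$ are stable, unstable, and central for the chosen $\mathbf t$, and possibly iterate the contraction with a second one-parameter subgroup (exactly as in the proof of Theorem \ref{existence of GDP}, where claim 2 is obtained by repeatedly conjugating to shrink the support $\Phi_u$) to land on a genuinely triangular non-diagonal matrix. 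A secondary point to check is that the conjugated lattice $a_{s_n}px_0$ can be rewritten, using $a_{s_n}px_0 = (a_{s_n}pa_{s_n}^{-1})(a_{s_n}x_0)$, so that the convergence really is of the product form and no normalizing scalar is lost.
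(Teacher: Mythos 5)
The central difficulty with your proposal is in step (2), and it is not a technicality that can be deferred. Conjugating by $a_s=\exp\bigl(s(2,-1,-1)\bigr)$ and letting $s\to+\infty$ sends $a_s p a_s^{-1}$ to the block-diagonal matrix $p'=\mathrm{diag}(\al,B)$ with $B=\left(\begin{smallmatrix}\ga&\del\\ \tau&\mu\end{smallmatrix}\right)$, which is \emph{not} upper or lower triangular when $\del\neq 0$, so neither Corollary 1.4 of \cite{LW} nor the root-group density facts apply to $p'x_0'$. The only remaining torus direction is $\mathrm{diag}(1,e^r,e^{-r})$, and contracting the $(2,3)$-entry $\del$ (taking $r\to-\infty$) simultaneously \emph{expands} the $(3,2)$-entry $\tau$, since these are opposite roots. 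So iterating the contraction, as you suggest, cannot land on a non-diagonal triangular matrix while keeping $\tau$ alive; you trade one off-diagonal obstruction for the other, or collapse to a diagonal. This is exactly where a pure ``conjugate and pass to the limit'' strategy is structurally unable to use the hypothesis $\tau,\mu\ne 0$, and it is the point at which the paper's proof introduces an essentially different idea.

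The paper does not try to kill $\del$ at all. It first factors $p=u_{23}(t_0)\,b$ with $t_0=\del/\mu$ and $b$ lower-triangular and non-diagonal (this uses $\tau\ne 0$), so that $x_1:=bx_0$ already has dense $A$-orbit by \cite{LW}. It then reduces the theorem to showing $x_1\in\overline{Apx_0}$, which would follow from a recurrence sequence $a_n$ for $x_1$ (i.e.\ $a_n x_1\to x_1$) which in addition satisfies $a_n u_{23}(t_0)a_n^{-1}\to e$, that is, $a_n$ escapes to infinity inside the half-plane $R_m=\set{t_2-t_3\le -m}$. The crux is the half-plane density lemma: $A_m x_1$ is dense, where $A_m=\exp R_m$. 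This is proved by conjugating along $a'a_t$ with $a'=\mathrm{diag}(e^{-m},1,e^m)$, a shift chosen so that the $(3,2)$-root is \emph{constant} along the line $a'a_t$: then $b_2=u_{32}(\tau/\mu)$ conjugates to a fixed nonzero $u_{32}(e^m\tau/\mu)$ while the lower-left entries of $b_1$ decay, and one uses that $\set{a_t}_{t\ge 0}$ equidistributes on the compact torus $Ax_0$ because the line $a_t$ is irrational relative to $\mathrm{Stab}_A(x_0)$. Your sketch picks the right direction $a_t$ but is missing both the initial factorization $p=u_{23}(t_0)b$ and the half-plane recurrence mechanism, and these are the load-bearing parts of the argument.
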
  
\begin{proof}
A straightforward computation shows
\begin{equation}\label{shrinking translate}
p=\pa{
\begin{array}{lll}
1&0&0\\
0&1&\frac{\del}{\mu}\\
0&0&1
\end{array}
}
\pa{
\begin{array}{lll}
\al&0&0\\
\be-\frac{\del\eta}{\mu} & \ga-\frac{\del\tau}{\mu}&0\\
\eta&0&\mu
\end{array}
}
\pa{
\begin{array}{lll}
1&0&0\\
0& 1&0\\
0&\frac{\tau}{\mu}&1
\end{array}
}=u_{23}(t_0)b_1b_2,
\end{equation}
where we denoted $t_0=\frac{\del}{\mu}$ and the matrices appearing in the middle of ~\eqref{shrinking translate} by $u_{23}\pa{t_0},b_1$ and $b_2$ according to appearance. Note that the matrix $b=b_1b_2$ is nondiagonal as $\tau\ne 0$, hence by the preceding discussion, if we denote $x_1=bx_0$, then $x_1$ has a dense $A$-orbit. Hence, it is enough to show that $x_1$ belongs to the orbit closure of $px_0=u_{23}\pa{t_0}x_1$. This will follow from the existence of a recurrence sequence $a_n\in A$ for $x_1$ (i.e. a sequence such that $a_nx_1\to x_1$) which in addition satisfies $a_nu_{23}\pa{t_0}a_n^{-1}\to e,$ for then 
\begin{equation}\label{no name}
\lim a_n px_0=\lim a_n u_{23}(t_0)a_n^{-1}a_nx_1=x_1.
\end{equation}
A sequence $a_n$ satisfies $a_nu_{23}(t_0)a_n^{-1}\to e$, if and only if $t_2^{(n)}-t_3^{(n)}\to-\infty$, where $\textbf{t}^{(n)}=\log(a_n)$. Thus it is enough to show that for any $m>0$, there exists a recurrence sequence for $x_1$  in $A_m=\exp\pa{R_m}$, where $R_m=\set{\textbf{t}\in \goa : t_2-t_3\le-m}$ is a half plane. Choose $m>0$. We shall show that in fact $A_mx_1$ is dense in $X_3$. Denote 
\begin{equation}\label{a-t}
a_t=diag\pa{e^{2t},e^{-t},e^{-t}}\textrm{ and }a'=diag\pa{e^{-m},1,e^m}.
\end{equation}
The line $\set{a'a_t}_{t\in\bR}$ lies on the boundary of $A_m$. As $b=b_1b_2$, we write (emphasizing the desired partition into products)
\begin{equation}\label{c.1}
A_mx_1\supset a'a_tx_1=a'a_tbx_0=\pa{a'a_tb_1\pa{a'a_t}^{-1}}\cdot\pa{\pa{a'a_t}b_2\pa{a'a_t}^{-1}}\cdot\pa{\pa{a'a_t}x_0}.
\end{equation}
We observe that for any sequence $t_n\to\infty$, $a'a_{t_n}b_1\pa{a'a_{t_n}}^{-1}$ converges to the diagonal matrix $a''=diag\pa{\al,\ga-\frac{\del\tau}{\mu},\mu}$, while at the same time $\pa{a'a_{t_n}}b_2\pa{a'a_{t_n}}^{-1}$ converges to $u_{32}\pa{s_0},$  where $s_0=e^m\frac{\tau}{\mu}\ne 0$. Furthermore, because $Ax_0\simeq A/Stab_A(x_0)$, and because the line $a_t$ is irrational with respect to the lattice $Stab_A(x_0)$ (by theorem ~\ref{compact lattice orbits}), any trajectory of $\set{a_t}_{t\ge 0}$ in $Ax_0$ is dense there. In particular any point in $Ax_0$ is a limit point of some sequence $\pa{a'a_{t_n}}x_0$, for some sequence $t_n\to\infty$.   It follows now from  ~\eqref{c.1}, that 
\begin{equation}\label{c.2}
\overline{A_mx_1}\supset a''u_{32}\pa{s_0}Ax_0=u_{32}(s_1)Ax_0,
\end{equation}
for a suitable choice of $s_1\ne 0$. 
As $A_m$ is closed under multiplication, $\overline{A_mx_1}$ closed under the action of $A_m$. In particular, it follows from ~\eqref{c.2}, that for any $a\in A_m$, $au_{32}(s_1)a^{-1}x_0\in\overline{A_mx_1}$, i.e. $u_{32}(s)x_0\in \overline{A_mx_1}$, where $s$ ranges over the set $\set{e^ts_1:t\ge m}$, which is a ray. The discussion preceding this proof now implies the density of $A_mx_1$ and in particular that $x_1\in\overline{A_mx_1}$ as desired.
\end{proof}
\begin{corollary}\label{cubics are good}
Let $K$ be a totally real cubic number field and let $1,\al,\be$ be a basis of $K$ over $\bQ$. Denote $v=(\al,\be)^t\in\bR^2$. 
Then the lattices $x_v,z_v$, have dense $A$ orbits in $X_3$ and in particular they are $GDP$ by corollary ~\ref{dense imply GDP}.
\end{corollary}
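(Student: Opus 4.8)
The plan is to deduce both assertions from Theorem~\ref{density}, by writing $z_v$ and then $x_v$ in the form $px_0$ with $Ax_0$ compact and $p$ of the shape appearing there. Consider $z_v$ first. Let $\sig_1,\sig_2,\sig_3$ be the embeddings of $K$ into $\bR$, with $\sig_1$ the inclusion, and let $M\in GL_3(\bR)$ be the matrix whose columns are the geometric embeddings $\vphi(1),\vphi(\al),\vphi(\be)$, so that its $i$-th row is $(\sig_i 1,\sig_i\al,\sig_i\be)$; recall that $\det M\ne 0$ because $\vphi(\bZ+\bZ\al+\bZ\be)$ is a lattice in $\bR^3$. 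After permuting the second and third rows if necessary (which merely interchanges $\sig_2$ and $\sig_3$) we may assume $\det M>0$; set $c=(\det M)^{-1/3}$ and $x_0=\overline{cM}\in X_3$. Since $cM$ is, up to the normalizing scalar $c$, the geometric embedding of the lattice $\bZ+\bZ\al+\bZ\be$ of the totally real cubic field $K$, Theorem~\ref{compact lattice orbits} shows that $Ax_0$ is compact. The decisive observation is that the first row of $g_v$ (for $d=3$, $v=(\al,\be)^t$) is exactly $(1,\al,\be)$, the first row of $M$. Hence $L:=Mg_v^{-1}$ has first row $(1,0,0)$; its other two rows are $(1,\sig_2\al-\al,\sig_2\be-\be)$ and $(1,\sig_3\al-\al,\sig_3\be-\be)$, and $\det L=\det M$. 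Since $g_v=L^{-1}M$, we get $z_v=\overline{g_v}=px_0$ with $p:=c^{-1}L^{-1}\in G_0$. Because the first row of $L$ is $(1,0,0)$, so is the first row of $L^{-1}$, and therefore $p$ has first row $(c^{-1},0,0)$ --- precisely the shape of the matrix in Theorem~\ref{density}. A direct inversion of $L$ shows that the $(3,2)$- and $(3,3)$-entries of $p$ equal $-c^{-1}(\sig_3\al-\al)/\det M$ and $c^{-1}(\sig_2\al-\al)/\det M$; both are nonzero, since $\al\notin\bQ$ (otherwise $1,\al$ would be $\bQ$-dependent) forces $\bQ(\al)=K$, so that $\al$ has three pairwise distinct conjugates. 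Theorem~\ref{density} now gives $\overline{Az_v}=X_3$.

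For $x_v$ I would pass to the dual lattice. The map $y\mapsto y^{*}$ sending a unimodular lattice to its dual is a homeomorphism of $X_3$ induced by the automorphism $g\mapsto{}^tg^{-1}$ of $SL_3(\bR)$; this automorphism preserves $SL_3(\bZ)$ and restricts to $a\mapsto a^{-1}$ on $A$, hence carries $A$-orbits to $A$-orbits, so that $\overline{Ay}=X_3$ if and only if $\overline{Ay^{*}}=X_3$. Now $x_v^{*}=\overline{{}^th_v^{-1}}$, and conjugating ${}^th_v^{-1}$ by the coordinate-reversing permutation $w_0$ --- which normalizes $A$, so that $y\mapsto w_0y$ likewise preserves density of $A$-orbits --- turns $x_v^{*}$ into $z_{v'}$ with $v'=(-\be,-\al)^t$. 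Since $1,-\be,-\al$ is again a $\bQ$-basis of $K$, the previous paragraph yields $\overline{Az_{v'}}=X_3$, and hence $\overline{Ax_v}=X_3$. Corollary~\ref{dense imply GDP} then shows that $x_v$ and $z_v$ are $GDP$.

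The only substantive step is the computation for $z_v$: it is the coincidence of the first rows of $g_v$ and of the geometric embedding of $\set{1,\al,\be}$ that forces $p=c^{-1}(Mg_v^{-1})^{-1}$ into the form demanded by Theorem~\ref{density}, and the non-vanishing of its $(3,2)$- and $(3,3)$-entries is exactly where the hypothesis $[K:\bQ]=3$ enters. I would emphasize that the detour through the dual is essential rather than cosmetic: the first row of every generating matrix of $x_v$ spans only a $2$-dimensional $\bQ$-subspace of $\bR^3$, whereas a row of a geometric embedding spans a $3$-dimensional one, so $x_v$ itself cannot be written as $px_0$ with $p$ of the required shape and $Ax_0$ compact.
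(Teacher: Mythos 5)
Your proof is correct, and for $z_v$ it follows essentially the same route as the paper: both write $z_v = p\,x_0$ with $x_0 = \overline{cM}$ the (normalized) geometric embedding of $\bZ+\bZ\al+\bZ\be$ and $p = c^{-1}(Mg_v^{-1})^{-1}$, and then invoke Theorem~\ref{density}. You supply the explicit inversion of $L=Mg_v^{-1}$ and verify directly that the $(3,2)$ and $(3,3)$ entries of $p$ equal $-c^{-1}(\sig_3\al-\al)/\det M$ and $c^{-1}(\sig_2\al-\al)/\det M$, which are nonzero because $\bQ(\al)=K$ forces $\sig_2\al,\sig_3\al\ne\al$; the paper leaves this computation to the reader. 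For $x_v$ you and the paper both pass to the dual $x_v^*=\overline{{}^th_v^{-1}}$, but then your routes diverge: the paper asserts that density of $A\overline{{}^th_v^{-1}}$ follows ``in a similar way'' to the $z_v$ case, whereas you reduce it outright to the already-proved $z_{v'}$ case (with $v'$ having entries $\pm\al,\pm\be$) by conjugating with a coordinate-reversing permutation that normalizes $A$. Your reduction is cleaner and avoids a hand-waved re-run of the main computation.

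One small point worth tightening: the matrix $w_0$ that reverses coordinates in dimension $3$ has determinant $-1$, so it is not in $G_0=SL_3(\bR)$. This is harmless --- either work with the space of unimodular lattices, on which $\Lambda\mapsto w_0\Lambda$ is a well-defined homeomorphism commuting with the $A$-action up to the normalizing automorphism, or replace $w_0$ by a determinant-one variant obtained by negating one row. The latter changes $v'$ by signs, e.g.\ to $(\be,-\al)^t$, but since $1,\pm\al,\pm\be$ is still a $\bQ$-basis of $K$ the first part of the argument applies verbatim.
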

\begin{proof}
Let us denote $\al=\al_1,\be=\be_1$ and let $\al_i,\be_i,i=2,3$, be the other two embeddings of $\al_1,\be_1$ into the reals. Denote $
g_0=c\pa{\begin{array}{lll}
1&\al_1&\be_1\\
1&\al_2&\be_2\\
1&\al_3&\be_3
\end{array}
}$, where $c$ is chosen so that $\det\pa{g_0}=1$. Then, $\bar{g}_0\in X_3$ has a compact $A$-orbit by theorem ~\ref{compact lattice orbits}. It is easy to see that there exists a unique matrix $p\in G_0$ as in lemma ~\ref{density}, such that (recall the notation of $\S\S$~\ref{DA})
\begin{equation}\label{strudel}
pg_0=g_v.
\end{equation}
The reader can easily check that the relevant entries of $p$ must be nonzero. We apply lemma ~\ref{density}, and conclude that $\bar{g}_v=z_v$ has a dense $A$-orbit in $X_3$. In order to see that $x_v$ has a dense orbit, we note that the involution $g\mapsto (g^t)^{-1}=g^*$ of $G_0$ , descends to a diffeomorphism of $X_3$. We denote this map by $\bar{g}\mapsto\bar{g}^*=\overline{g^*}$. This is the well known map which sends a lattice to its dual. Since the group $A$ is invariant under this involution, we have that for any lattice $x$, $\pa{\overline{Ax}}^*=\overline{Ax^*}.$ In particular, $x$ has a dense orbit if and only if $x^*$ has. In a similar way to what we have already shown, one can show that the lattice spanned by the columns of 
$g_1=\pa{
\begin{array}{lll}
1&0&0\\
0&1&0\\
-\al&-\be&1
\end{array}
},$ has a dense $A$-orbit, in $X_3$. As $g_1^*=h_v$, it follows that $\bar{h}_v=x_v$ has a dense orbit too, as desired. 
\end{proof}
\section{Irregular $A$ orbits}\label{irregular orbits}
In this section we use the existence of lattices $x\in X_d$ ($d\ge 3$) for which $\mu(x)=0$ (theorem ~\ref{theorem 101}), and theorem ~\ref{corollary 102}(2) to give examples of lattices in $X_3$ having irregular $A$ orbit closures of a new type. This serves as a counterexample to conjecture 1.1 in \cite{Ma}. Our example proceeds the recent counterexample, F .Maucourant gave to this conjecture in ~\cite{Mau}. Our example is different in nature from Maucourant's example. We use a maximal split torus whilst in \cite{Mau} the acting group does not "separate roots" which seem to be the reason for the abnormality. It still seems plausible that a slightly different version of that conjecture will be true.   
\begin{proof}[Proof of theorem ~\ref{corollary 102}(2)]
We first note that if $x_1,x_2\in X_d$ are commensurable lattices (that is their intersection is of finite index in each), then $\mu(x_1)=0$ if and only if $\mu(x_2)=0$. Let $v\in\bR^{d-1}$ satisfy $\dim_\bQ span\set{1,v_1\dots,v_{d-1}}\le 2$. Then, there exists $\al\in\bR$ and rationals $p_i,q_i, i=1\dots d-1$ such that $v_i=q_i\al+p_i$. Denote $v'=(q_1\al,\dots,q_{d-1}\al)^t$. It follows that $x_v,z_v$ are commensurable to $x_{v'},z_{v'}$ respectively.\\
\textbf{Claim} 1: $\mu(x_{v'})>0$. Working with the definition of $\mu$ we see that it is enough to argue the existence of $d-1$ real numbers $\ga_i$ for which
\begin{equation}\label{mu ind}
N\pa{x_{v'}+(-\ga_1,\dots,-\ga_{d-1},1/2)^t}=\inf_{n\in\bZ}\av{n+1/2}\prod_1^{d-1}\idist{nq_i\al-\ga_i}>0.
\end{equation}
From Davenport's result described in $\S\S$~\ref{dim2} it follows that there exists $\ga_i\in\bR$ such that for each $i$, $\inf_{n\in\bZ} \av{n+1/2}\idist{nq_i\al-\ga_i}>0$. Moreover, if we denote by $m$, a common denominator for the $q_i$'s , then by \cite{Ca2} (theorem 1), we can choose the $\ga_i$'s such that for any $i\ne j$, $\frac{\ga_i}{q_i}-\frac{\ga_j}{q_j}\notin\frac{1}{m}\bZ$. Denote for $r,s\in\bR$ by $\idist{r-s}_m$ the distance modulo $\frac{1}{m}\bZ$ from $r$ to $s$. Denote $\rho=\min_{i\ne j}\idist{\frac{\ga_i}{q_i}-\frac{\ga_j}{q_j}}_m$. Note that for $\eps>0$, $\idist{nq_i\al-\ga_i}<\eps \Rightarrow \idist{n\al-\frac{\ga_i}{q_i}}_m<\frac{\eps}{\av{q_i}}$. Hence if $\max_i\frac{\eps}{\av{q_i}}<\rho/2$, then $\idist{nq_i\al-\ga_i}<\eps$ for at most one index $i$. Let $\eps>0$ be such. Assume that the left hand side of ~\eqref{mu ind} is smaller than $\eps^{d-1}/2$. Then for some $k$, $\idist{nq_k\al-\ga_k}<\eps$ which implies that the left hand side of ~\eqref{mu ind} is $>\eps^{d-2}\inf_{n\in\bZ}\av{n+1/2}\idist{nq_k\al-\ga_k}>0$ as desired.\\
\textbf{Claim 2} $\mu(z_{v'})>0$. We use the notation as in Claim 1. From Davenport's result, we know that there exists $0\ne\ga\in\bR$ such that $\inf_{k\in\bZ}\av{k+1/2}\idist{k\pa{\al/m}-\ga}>0.$ The reader will easily argue the existence of a constant $c>0$ for which $$\forall\vec{n}\in\bZ^{d-1}\setminus\set{0},\quad \prod_1^{d-1}\av{n_i+1/2}\ge c\av{m\vec{q}\cdot\vec{n}+1/2}.$$
Working with the definition of $\mu$ we see that $$
\begin{array}{lll}
\mu(z_{v'})&\ge&N\pa{z_{v'}+(\ga,1/2,\dots,1/2)^t}=\inf_{\vec{n}\in\bZ^{d-1}}\prod\av{n_i+1/2}\idist{\pa{m\vec{q}\cdot\vec{n}}\al/m-\ga}\\
 &\ge&\min\set{\inf_{\vec{n}\cdot\vec{q}\ne 0}c\av{m\vec{q}\cdot\vec{n}+1/2}\idist{\pa{m\vec{q}\cdot\vec{n}}\al/m-\ga}; \frac{\av{\ga}}{2^{d-1}}}>0.
 \end{array}$$ 
\end{proof}
Observe that if $x_n\to x$ in $X_d$, then $\limsup\mu(x_n)\le\mu(x)$. It follows that for $x_0,x\in X_d$ 
\begin{equation}\label{mu inheritance}
 x_0\in\overline{Ax}\Rightarrow \mu(x_0)\ge\mu(x).
 \end{equation}
 By theorem ~\ref{theorem 101}, if $x\in X_d$, ($d\ge 3$) and $\mu(x)>0$, then $Ax$ is not dense.\\
The following statement is a special case of Conjecture 1.1 of ~\cite{Ma}:
\begin{conjecture}[Special case of conjecture 1.1 in ~\cite{Ma}]\label{Margulis conjecture}
For $x\in X_3$, one of the following  three options occurs:
\begin{enumerate}
	\item $Ax$ is dense.
	\item $Ax$ is closed.
	\item $Ax$ is contained in a closed orbit $Hx$ of an intermediate group $A<H<G$, where $H$ could be one of the following three subgroups of $G_0$:
	$$H_1=
	\left(\begin{array}{lll}
	*&*&0\\
	*&*&0\\
	0&0&*
	\end{array}\right), 
	H_2=
	\left(\begin{array}{lll}
	*&0&0\\
	0&*&* \\
	0&*&*
	\end{array}\right), 
	H_3=
	\left(\begin{array}{lll}
	*&0&* \\
	0&*&0\\
	*&0&*
	\end{array}\right).$$
\end{enumerate}
\end{conjecture}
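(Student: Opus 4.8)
The plan is to \emph{refute} Conjecture \ref{Margulis conjecture}, by exhibiting a single lattice $x\in X_3$ which violates all three alternatives. I would take $x=x_v$ with $v=(v_1,v_2)$ chosen so that $\dim_\bQ\mathrm{span}\{1,v_1,v_2\}=2$ and, in addition, $v_1,v_2$ are irrational and rationally proportional; concretely $v=(\sqrt2,\sqrt2)$ works, and more generally $v=\alpha(q_1,q_2)$ with $\alpha$ irrational and $q_1,q_2$ nonzero integers. (The lattice $z_v$ could be used in its place, with the same argument.) The three things to check are that $Ax_v$ is not dense, not closed, and not contained in any of the closed orbits $H_1x_v$, $H_2x_v$, $H_3x_v$.

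Alternative (1) is the quickest to rule out. Since $\dim_\bQ\mathrm{span}\{1,v_1,v_2\}\le 2$, Theorem \ref{corollary 102}(2) gives $\mu(x_v)>0$. Were $Ax_v$ dense we would have $\overline{Ax_v}=X_3$, and then \eqref{mu inheritance} would force $\mu(x)\ge\mu(x_v)>0$ for every $x\in X_3$, contradicting Theorem \ref{theorem 101}, which says $\mu$ vanishes Haar-almost everywhere. Hence $Ax_v$ is not dense.

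For alternative (3): since $A<H_i$ and $x_v\in H_ix_v$ we automatically have $Ax_v\subset H_ix_v$, so (3) holds for $x_v$ precisely when at least one of $H_1x_v,H_2x_v,H_3x_v$ is closed. Identifying $H_1,H_2,H_3$ as the subgroups of $G_0$ preserving the pairs (coordinate plane, complementary coordinate line) $(\langle e_1,e_2\rangle,\langle e_3\rangle)$, $(\langle e_2,e_3\rangle,\langle e_1\rangle)$, $(\langle e_1,e_3\rangle,\langle e_2\rangle)$ respectively, I would invoke the criterion that $H_ix$ is closed in $X_3$ exactly when \emph{both} subspaces attached to $H_i$ are rational for $x$ (each meets $x$ in a sublattice of full rank). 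A direct computation with the $\bZ$-basis $e_1,e_2,(v_1,v_2,1)^t$ of $x_v$ --- here using that $v_1,v_2$ are irrational and rationally proportional --- shows that the only coordinate subspaces rational for $x_v$ are $\langle e_1\rangle$, $\langle e_2\rangle$, $\langle e_1,e_2\rangle$. Hence in each of the three pairs above exactly one member is rational, none of $H_1x_v,H_2x_v,H_3x_v$ is closed, and alternative (3) fails. I expect this closedness criterion to be the delicate point: one must check that having merely one of the two attached coordinate subspaces rational is not enough, the obstruction being that the ``gluing data'' of $x_v$ relative to the rational plane $\langle e_1,e_2\rangle$ is irrational, so that points of $H_1x_v$ can converge, outside the orbit, to lattices whose gluing is rational.

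Finally, to rule out alternative (2) I would show $Ax_v$ is not closed. A short computation (again using $v_1,v_2\notin\bQ$) gives $\mathrm{Stab}_A(x_v)=\{e\}$, so if $Ax_v$ were closed the orbit map $A\to Ax_v\subset X_3$ would be a proper homeomorphism. I would contradict this with $a_n=\mathrm{diag}(1,e^{t_n},e^{-t_n})$, $t_n\to\infty$: since $e_1\in x_v$ is fixed by every $a_n$, it is enough to keep the quotient lattice $x_v/\bZ e_1$ in a fixed compact subset of $X_2$ along $\mathrm{diag}(e^{t_n},e^{-t_n})$; but that quotient is the unimodular lattice $\bZ(1,0)+\bZ(v_2,1)$, whose forward diagonal orbit in $X_2$ returns to a fixed compact set along a sequence of times $t_n\to\infty$ precisely because $v_2$ is irrational (it is not a forward-divergent geodesic). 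Picking $t_n$ along those recurrence times shows $a_nx_v$ stays bounded in $X_3$ while $a_n\to\infty$ in $A$, so $Ax_v$ is not closed. Since none of (1), (2), (3) holds for $x_v$, Conjecture \ref{Margulis conjecture} is false; besides the closedness criterion mentioned above, the only other external input needed is the classical non-divergence of geodesics of irrational slope in $X_2$.
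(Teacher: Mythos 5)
Your proposal is correct, and it takes a genuinely different route from the paper. Both proofs rule out alternative (1) identically: $\dim_{\bQ}\mathrm{span}\{1,v_1,v_2\}\le 2$ gives $\mu(x_v)>0$ by Theorem~\ref{corollary 102}(2), which combined with Theorem~\ref{theorem 101} and~\eqref{mu inheritance} rules out density. The divergence is in how (2) and (3) are handled. The paper keeps the one--parameter family $x_t$ ($v=(t,t)$), states the algebraic criterion ``$H\bar g$ is closed $\Leftrightarrow$ $g^{-1}Hg$ is a $\bQ$-group'' for $H\in\{A,H_1,H_2,H_3\}$, and then argues by a counting/pigeonhole: there are only countably many $\bQ$-subgroups of $G_0$, and $H$ is of finite index in its normalizer, so if the criterion held for every $t$ one could find $t\ne s$ with $h_{k(t-s)}\in H$, which is visibly false. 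This is a pure existence argument and does not pin down a specific $t$. You instead fix a concrete $v$ (e.g.\ $v=(\sqrt2,\sqrt2)$), translate the same closedness criterion for $H_1,H_2,H_3$ into rationality of the two associated coordinate subspaces, and verify by hand that only $\langle e_1\rangle,\langle e_2\rangle,\langle e_1,e_2\rangle$ are rational for $x_v$ (this only needs $v_1,v_2\notin\bQ$; rational proportionality enters only to secure $\mu(x_v)>0$). That kills (3). For (2) you give a separate dynamical argument --- trivial $A$-stabilizer plus recurrence of the sub-geodesic $\mathrm{diag}(e^t,e^{-t})$ on the quotient $2$-lattice $\bZ(1,0)+\bZ(v_2,1)$ (non-divergent because $v_2$ is irrational) --- to contradict properness of the orbit map; note you could also have disposed of (2) by the same rationality criterion, since $h_v^{-1}\mathfrak a h_v$ contains no nonzero rational vector when $v_1,v_2\notin\bQ$, which would make the proof more uniform. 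What your approach buys is an explicit violating lattice and a more geometric/self-contained argument; what the paper's approach buys is brevity and avoidance of any case-by-case rationality computation, at the cost of being non-constructive about $t$. You correctly flag the closedness criterion as the delicate point: the paper also merely asserts it (``we note that\dots''), so neither proof supplies a reference or derivation for it.
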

For $t\in\bR$ denote $v_t=(t,t)^t\in\bR^2$. We denote the one parameter group $h_{v_t}$ (recall the notation of \S\S ~\ref{DA}) simply by $h_t$ and the lattice $x_{v_t}$ by $x_t$. 
\begin{theorem}
There exists $t\in\bR$ such that $x_t\in X_3$ violates conjecture ~\ref{Margulis conjecture}.
\end{theorem}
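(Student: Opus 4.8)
The plan is to produce the desired $t$ by a counting/pigeonhole argument on the uncountably many lattices $x_t$, $t\in\bR$, playing the ``large'' set of $t$ for which $x_t$ is generic against the ``small'' sets on which each of the exceptional behaviors of Conjecture~\ref{Margulis conjecture} can occur. First I would record what each option costs. By Theorem~\ref{corollary 102}(1), for almost every $t$ the lattice $x_t$ satisfies \eqref{mu(x)1}, hence $\mu(x_t)=0$; fix one such $t$ with, in addition, $1,t$ linearly independent over $\bQ$ (a full-measure further restriction), so that by Theorem~\ref{corollary 102}(2) the second alternative fails for no cheap reason --- I still must rule out options (2) and (3) for this particular $t$, so the real work is to show that a $t$ with $\mu(x_t)=0$ can be chosen outside the (at most countable, or at least measure-zero and ``sparse'') family of $t$ for which $Ax_t$ is closed or lies in a closed $H_i$-orbit.

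The key steps, in order, are: (i) Observe that if $Ax_t$ is dense then by Corollary~\ref{dense imply GDP} it is $GDP$, which is consistent with $\mu(x_t)=0$; so a violation requires $\mu(x_t)=0$ together with $Ax_t$ \emph{not} dense. Thus it suffices to find $t$ with $\mu(x_t)=0$ and $Ax_t$ not dense, and then check that non-density forces one of (2),(3) if the conjecture were true --- i.e. the conjecture predicts $\mu(x_t)=0 \Rightarrow Ax_t$ dense (by the remark after \eqref{mu inheritance}, options (2) and (3) both force $\mu(x_t)>0$: a closed $A$-orbit gives a compact orbit hence positive inhomogeneous minimum by Theorem~\ref{compact lattice orbits} and dimension-$2$ bounds applied inside it, and each $H_i$-orbit closure contains a point whose product set is controlled by a lower-dimensional form, which is bounded away from $0$ by Davenport's theorem as in the proof of Theorem~\ref{corollary 102}(2)). (ii) Hence it is enough to exhibit a single $t$ with $\mu(x_t)=0$ but $Ax_t$ not dense. (iii) Produce such $t$ by noting that the set of $t$ for which $Ax_t$ \emph{is} dense, while of full measure, is a proper subset: the set of $t$ for which $x_t$ has a compact $A$-orbit is nonempty (take $t$ generating a totally real cubic field --- wait, that gives \emph{dense} orbit by Corollary~\ref{cubics are good}); instead use that there exist $t$ with $Ax_t$ closed or contained in a proper closed homogeneous subset (for instance $t$ rational, or $t$ in a real quadratic field, giving intermediate behavior), these being a measure-zero set $E$, while $\{t:\mu(x_t)=0\}$ has full measure; pick $t$ in the full-measure complement of $E$ on which additionally $Ax_t$ is \emph{not} dense. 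The existence of such a $t$ is the crux.

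For step (iii) the honest route is: by Theorem~\ref{theorem 101} / Corollary~\ref{almost any GDP}, almost every $t$ has $\mu(x_t)=0$; if \emph{every} such $t$ had $Ax_t$ dense, then outside measure zero density would be automatic, and the conjecture would hold trivially on a full-measure set --- which is not the contradiction we want. Rather, I would invoke the known (from \cite{LW}, \cite{Mau}, or the literature on the set of non-dense orbits) fact that $\{t : Ax_t \text{ not dense}\}$ is uncountable --- indeed it contains, besides the countable set of $t$ for which $x_t$ is commensurable to a lattice with a compact orbit, a Cantor-type set --- and on an uncountable such set the condition $\mu(x_t)=0$ must still hold for uncountably many $t$ because $\{t:\mu(x_t)>0\}$, being a countable union of the sets where $x_t$ lies in a closed $H$-orbit plus closed $A$-orbits, is ``small'' (measure zero, and in fact meager). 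Intersecting, there is $t$ with $\mu(x_t)=0$ and $Ax_t$ not dense; by step (i) this $t$ violates the conjecture.

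The main obstacle is step (iii): one must genuinely show that $\{t:\mu(x_t)=0\}$ meets $\{t:Ax_t\text{ not dense}\}$, i.e. that zero inhomogeneous minimum does not force a dense orbit for these one-parameter families. I expect this to be handled by taking $t$ inside a totally real cubic field so that $x_t$ has a \emph{dense} orbit and $\mu(x_t)=0$ --- no, that does not violate the conjecture. The correct and delicate point is instead to find $t$ with $\mu(x_t)=0$ lying in the \emph{closure} of the orbit of a cubic point while itself having non-closed, non-dense orbit trapped between; concretely, use \eqref{mu inheritance}: if $x_0$ is a suitable cubic (compact-orbit) point and $x_t\to$ something in $\overline{Ax_t}\ni x_0$ is arranged via Theorem~\ref{density}-type constructions with $x_t$ generic in the $H_1$-direction but the global orbit not closing up, one forces $\mu(x_t)=0$ (by $GDP$, Corollary~\ref{dense imply GDP} applied along the dense root-group orbit) while $Ax_t$ avoids being dense or closed; verifying that this last avoidance actually happens for some explicit $t$, using the rigidity input of \cite{LW} together with Theorem~\ref{corollary 102}(2) to preclude the conjecture's option (3), is the heart of the argument.
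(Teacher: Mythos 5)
Your proposal starts from a false premise and therefore misses the actual mechanism of the proof. The lattices $x_t$ in this theorem are, by the paper's definition two lines above the statement, $x_{v_t}$ with $v_t=(t,t)^t$. For \emph{every} real $t$ one has $\dim \mathrm{span}_\bQ\{1,t,t\}\le 2$, so Theorem~\ref{corollary 102}(2) gives $\mu(x_t)>0$ for \emph{all} $t$, not $\mu(x_t)=0$ for almost all $t$. Your plan to ``fix one such $t$ with $\mu(x_t)=0$'' is therefore vacuous: no such $t$ exists in this family, and the diagonal family $\{(t,t):t\in\bR\}$ is Lebesgue-null in $\bR^2$, so Theorem~\ref{corollary 102}(1) says nothing about it. The fact that $\mu(x_t)$ is \emph{always positive} is the engine of the argument, not an obstruction to it.

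The paper's actual logic is the mirror image of yours, and much more economical. Since $\mu(x_t)>0$ for all $t$, and by Theorem~\ref{theorem 101} almost every lattice has $\mu=0$, the inheritance inequality~\eqref{mu inheritance} shows $Ax_t$ can never be dense (a dense orbit would force $\mu(x_0)\ge\mu(x_t)>0$ for \emph{every} $x_0\in X_3$, contradicting Theorem~\ref{theorem 101}). So option~(1) fails for every $t$. Options~(2) and~(3) are then ruled out for uncountably many $t$ by a pure counting argument on the parameter: closedness of $H\bar g$ for $H\in\{A,H_1,H_2,H_3\}$ is equivalent to $g^{-1}Hg$ being $\bQ$-defined, there are only countably many $\bQ$-groups, and a normalizer computation (using that each $H$ has finite index in its normalizer) shows that distinct $h_t$ cannot conjugate $H$ to the same $\bQ$-group except for countably many exceptions, because $h_{k(t-s)}\notin H$. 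Your sketch never contains either of these two ingredients (the ``$\mu>0$ for all $t$ kills density'' step, or the countability-of-$\bQ$-groups step), and your intermediate claim that options~(2) and~(3) force $\mu>0$ is not established anywhere (nor is it used in the paper). As written, the proposal does not constitute a proof.
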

\begin{proof}
By theorem ~\ref{corollary 102}(2), $\mu(x_t)>0$ hence by theorem ~\ref{theorem 101} and ~\eqref{mu inheritance}, possibility (1) is ruled out. To rule out possibilities (2) and (3), we note that if $H$ is either one of the groups $A,H_1,H_2,H_3$, then for any $g\in G_0$,  $H\bar{g}$ is closed in $X_3$ if and only if $g^{-1}Hg$ is defined over $\bQ$. Assume to get a contradiction that for any $t\in\bR$, for $H$ equals one of the above, the group $h_t^{-1}Hh_t$ is defined over $\bQ$. Two elements $g_1,g_2\in G_0$ conjugate $H$ to the same group if and only if $g_1g_2^{-1}$ normalizes $H$. All the above groups are of finite index in their normalizers in $G_0$ and so there exists some $k$ such that whenever $g$ normalizes $H$, then $g^k\in H$. As there are only countably many $\bQ$-groups in $G_0$, there must exist some $t\ne s$ such that $\pa{h_th_s^{-1}}^k=h_{k(t-s)}\in H$ which of course never happens.    
\end{proof}

\begin{thebibliography}{99}
\bibitem[B]{B} D. Berend, \textit{Multi-invariant sets on tori}. Trans. A.M.S . 280 (1983), 509-532. 
\bibitem[Ba]{Ba} E.S Barnes \textit{The inhomogeneous minima of indefinite quadratic forms.} J. Austral. Math. Soc 2 (1961/62).
\bibitem[Bac]{Bac} P. Bachmann, \textit{Die Arithmetik der quadratischen formen}. Zweite Abtheilung, especially Kap. 12 (Die zerlegbaren formen). Leipzig and Berlin: Teubner (1923).
\bibitem[Bu]{Bu} Y. Bugeaud, \textit{Multiplicative Diophantine approximation.}  To appear in Dynamical systems and Diophantine Approximation, Proceedings of the conference held at the Institut Henri Poincare, Seminaires et Congres, Societe mathematique de France.
\bibitem[Ca]{Ca} J. W. S Cassels, \textit{An Introduction to the Geometry of Numbers}. Corrected reprint of the
1971 edition, Classics in Mathematics, Springer-Verlag, Berlin, 1997.
\bibitem[Ca2]{Ca2} J. W. S Cassels, \textit{The inhomogeneous minimum of binary quadratic ternary cubic and quaternary quartic forms.} Proc. Cambridge Phil. Soc. 48, 72-86, 519-520. 
\bibitem[CaSD]{CaSD} J. W. S. Cassels and H. P. F. Swinnerton-Dyer, \textit{On the product of three homogeneous
linear forms and the indefinite ternary quadratic forms}, Philos. Trans. Roy. Soc.
London. Ser. A. 248, (1955). 73–96.
\bibitem[D]{D} H. Davenport \textit{Indefinite binary quadratic forms and Euclid's algorithm in real quadratic fields} Proc. London Math. Soc. (2)  53,  (1951). 65-82.
\bibitem[F]{F} H. Furstenberg, \textit{Disjointness in ergodic theory, minimal sets, and a problem in Diophantine
approximation}. Math. Systems Theory, 1:1–49, 1967.
\bibitem[LW]{LW} E. Lindenstrauss,  B. Weiss, \textit{On sets invariant under the action of the diagonal group}.  Ergodic Theory Dynam. Systems  21  (2001),  no. 5, 1481--1500.
\bibitem[Ma]{Ma} G. A. Margulis. \textit{Problems and conjectures in rigidity theory}, Mathematics: frontiers
and perspectives, pages 161-174. Amer. Math. Soc., Providence, RI, 2000.
\bibitem[Mau]{Mau} F. Maucourant. \textit{A non-homogeneous orbit of a diagonal subgroup} 	arXiv:0707.2920v2 [math.DS]
\bibitem[R]{R} M. Ratner, \textit{ Raghunathan's topological conjecture and distributions of unipotent flows.}
Duke Math. J. 63 (1991), no. 1, 235–280.
\bibitem[Mc]{Mc} C. T. McMullen. \textit{Minkowski's conjecture, well-rounded lattices and topological dimension.} J.Amer.Math.Soc 18(2005), 711-734
\bibitem[Sh]{Sh} U. Shapira. \textit{On a generalization of Littlewood's conjecture}, preprint (2008).
\bibitem[ET]{Ts} M. Einsiedler and J. Tseng. \textit{Badly approximable systems of affine forms.} Preprint.
\end{thebibliography}
\end{document}